\tikzset{font={\fontsize{10pt}{12}\selectfont}}
\declaretheoremstyle[
spaceabove = .7\baselineskip\@plus.2\baselineskip\@minus.2\baselineskip, 
spacebelow = .7\baselineskip\@plus.2\baselineskip\@minus.2\baselineskip,
headfont = \normalfont\bfseries,
notefont = \mdseries, 
notebraces = {(}{)},
bodyfont = \normalfont\itshape,
postheadspace = .5em,
headpunct = .
]{bolditalic}
\declaretheoremstyle[
spaceabove = .5\baselineskip\@plus.2\baselineskip\@minus.2\baselineskip, 
spacebelow = .5\baselineskip\@plus.2\baselineskip\@minus.2\baselineskip,
headfont = \normalfont\bfseries,
notefont = \mdseries, 
notebraces = {(}{)},
bodyfont = \normalfont,
postheadspace = .5em,
headpunct = .
]{boldnormal}
\declaretheoremstyle[
spaceabove = .5\baselineskip\@plus.2\baselineskip\@minus.2\baselineskip, 
spacebelow = .5\baselineskip\@plus.2\baselineskip\@minus.2\baselineskip,
headfont = \normalfont\itshape,
notefont = \mdseries, 
notebraces = {}{},
bodyfont = \normalfont,
postheadspace = .5em,
headpunct = .,
qed = \qedsymbol
]{proofstyle}
\declaretheorem[name = Claim, numbered = yes, style = bolditalic, refname = {claim,claims}, Refname = {Claim,Claims}]{claim}
\declaretheorem[name = Corollary, numberlike = claim, style = bolditalic, refname = {corollary,corollaries}, Refname = {Corollary,Corollaries}]{corollary}
\declaretheorem[name = Definition, numberlike = corollary, style = bolditalic, refname = {definition,definitions}, Refname = {Definition,Definitions}]{definition}
\declaretheorem[name = Lemma, numberlike = corollary, style = bolditalic, refname = {lemma,lemmata}, Refname = {Lemma,Lemmata}]{lemma}
\declaretheorem[name = Proposition, numberlike = corollary, style = bolditalic, refname = {proposition,propositions}, Refname = {Proposition,Propositions}]{proposition}
\declaretheorem[name = Theorem, numberlike = corollary, style = bolditalic, refname = {theorem,theorems}, Refname = {Theorem,Theorems}]{theorem}
\declaretheorem[name = Remark, numberlike = corollary, style = boldnormal, refname = {remark,remarks}, Refname = {Remark,Remarks}]{remark}
\title{Graphs with no induced $K_{2, t}$}
\author{Freddie Illingworth\thanks{DPMMS, University of Cambridge, UK. \emph{E-mail:} \textsf{fci21@cam.ac.uk}. Research supported by an EPSRC grant. \hfill \null \linebreak \emph{2020 MSC:} 05C35.}}
\date{}
\begin{document}

\maketitle

\begin{abstract}
	Consider a graph $G$ on $n$ vertices with $\alpha \binom{n}{2}$ edges which does not contain an induced $K_{2, t}$ ($t \geqslant 2$). How large must $\alpha$ be to ensure that $G$ contains, say, a large clique or some fixed subgraph $H$? We give results for two regimes: for $\alpha$ bounded away from zero and for $\alpha = o(1)$.
	
	Our results for $\alpha = o(1)$ are strongly related to the Induced Tur\'{a}n numbers which were recently introduced by Loh, Tait, Timmons and Zhou. For $\alpha$ bounded away from zero, our results can be seen as a generalisation of a result of Gy\'{a}rf\'{a}s, Hubenko and Solymosi and more recently Holmsen (whose argument inspired ours).
\end{abstract}

\section{Introduction}

Fix an integer $t \geqslant 2$ and consider a graph $G$ on $n$ vertices with $\alpha \binom{n}{2}$ edges which does not contain an induced $K_{2, t}$. How large does $\alpha$ have to be to ensure that $G$ contains some substructure (like a large clique or a fixed subgraph $H$)? We consider two regimes: $\alpha$ is bounded away from zero and $\alpha$ goes to zero as $n$ goes to infinity.

In the regime where $\alpha$ is bounded away from zero, $G$ will contain substructures that grow with $n$ (so for example the clique number of $G$, $\omega(G)$, will go to infinity). Gy\'{a}rf\'{a}s, Hubenko and Solymosi~\cite{GHS2002} dealt with the clique number in the case when $t = 2$ (that is, $G$ contains no induced $C_{4}$), confirming a conjecture of Erd\H{o}s.

\begin{proposition}[Gy\'{a}rf\'{a}s-Hubenko-Solymosi,~\cite{GHS2002}]\label{GHS}
	Let $G$ be a graph on $n$ vertices with $\alpha \binom{n}{2}$ edges. If $G$ does not contain an induced $K_{2, 2}$, then $\omega(G) \geqslant \alpha^{2} n/10$.
\end{proposition}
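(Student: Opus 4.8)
The plan is to build everything on one structural consequence of forbidding an induced $K_{2,2}$: \emph{for any two non-adjacent vertices $u,v$, the common neighbourhood $N(u)\cap N(v)$ induces a clique.} Indeed, if $x,y\in N(u)\cap N(v)$ were non-adjacent then $\{u,v,x,y\}$ would induce a $K_{2,2}$ with parts $\{u,v\}$ and $\{x,y\}$, which is forbidden. Thus every non-adjacent pair with a large common neighbourhood immediately yields a clique of the same size, and the whole problem reduces to the following: \emph{find a non-adjacent pair with at least $\alpha^2 n/10$ common neighbours.}

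First I would try to locate such a pair by double counting paths of length two (``cherries''). Writing $\bar d = \alpha(n-1)$ for the average degree, convexity of $\binom{\cdot}{2}$ gives $\sum_v\binom{d_v}{2}\geq n\binom{\bar d}{2}$, so the total number of cherries is at least about $\tfrac12\alpha^2 n^3$. Each cherry sits on the pair $\{x,y\}$ of its two endpoints, which is either a non-edge or an edge; in the latter case the cherry completes a triangle. If a constant fraction of the cherries lie on non-edges, then by averaging over the at most $\binom{n}{2}$ non-adjacent pairs some such pair carries $\approx\alpha^2 n$ common neighbours, and the structural lemma finishes the job with room to spare (this regime comfortably beats $\alpha^2 n/10$).

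The main obstacle is the opposite regime, where almost all cherries complete triangles, i.e.\ the graph is locally clustered (the extreme case being a disjoint union of cliques, which has no induced $K_{2,2}$ at all and for which the cherry count says nothing). Here I would argue that clustering is itself a source of large cliques: since $\sum_v e(N(v))$ equals three times the number of triangles and is now large, some neighbourhood $N(v)$ must be dense. Passing to a densest neighbourhood, I would either find it so dense that a single internal non-edge $\{x,y\}$ already has a common neighbourhood (necessarily a clique) of the required size --- recall a non-adjacent pair has at least $d_x+d_y-n$ common neighbours, which is large exactly when the degrees are --- or else recurse into this smaller, denser induced-$K_{2,2}$-free subgraph. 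Balancing these two regimes is the delicate point, and I expect the explicit constant $1/10$ in the statement to be precisely the slack needed to combine the cherry-counting estimate with the gain obtained from passing to a dense neighbourhood.
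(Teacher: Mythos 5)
Your opening reduction is exactly right and matches the final step of this paper's argument: for $t = 2$ the common neighbourhood of a non-adjacent pair contains no independent $2$-set, i.e.\ is a clique, so it suffices to find one missing edge lying in at least $\alpha^{2}n/10$ neighbourhoods. But your counting scheme has a genuine gap: the triangle-dominated regime is announced, not proved. Cherry counting says nothing when most cherries close into triangles, and your proposed fallback does not survive the very example you raise. In a disjoint union of cliques every neighbourhood is itself a clique, so the branch ``a single internal non-edge $\{x, y\}$ already has a large common neighbourhood'' is vacuous (there is no internal non-edge), and the bound ``a non-adjacent pair has at least $d_{x} + d_{y} - n$ common neighbours'' is vacuous whenever degrees are below $n/2$, i.e.\ for all $\alpha$ bounded away from $1$ --- precisely the regime of the proposition. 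The remaining suggestion, to recurse into a densest neighbourhood, is not set up as an induction: to close the recursion you would need the local density $\alpha'$ and neighbourhood size $d_{v}$ to satisfy $\alpha'^{2} d_{v} \geqslant \alpha^{2} n$, and nothing in the cherry/triangle dichotomy guarantees this, nor is any termination measure or base case given. So as written, the heart of the problem --- the clustered case --- is left open.

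The proof in this paper (Theorem~\ref{main}, following Holmsen) avoids the dichotomy by counting from below only the cherries you actually want: for each vertex $v$ it bounds $m_{v}$, the number of missing edges inside $\Gamma(v)$, which is exactly the number of non-edge cherries centred at $v$. The idea you are missing is that forbidding an induced $K_{2,2}$ forces missing edges to \emph{propagate} within a neighbourhood. Take a maximal collection of $\gamma_{v}$ pairwise disjoint missing edges in $G_{v}$. By maximality, the unmatched part of $\Gamma(v)$ is a clique, so under the contradiction hypothesis $\omega(G) \leqslant r$ with $r \leqslant \beta^{2}n$ one gets $\gamma_{v} \geqslant \tfrac{1}{2}\bigl[\deg(v) - \beta^{2}(n - 1)\bigr]$ --- this is how the union-of-cliques case is absorbed, since there $\gamma_{v} = 0$ forces every neighbourhood to be a small clique, contradicting the average degree. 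Moreover, if $\{a, b\}$ and $\{c, d\}$ are two disjoint missing edges in $\Gamma(v)$ and all four cross pairs were edges, then $\{a, b, c, d\}$ would induce a $K_{2,2}$; hence each pair of disjoint missing edges spawns a further missing edge, giving $m_{v} \geqslant \gamma_{v} + \binom{\gamma_{v}}{2}$. Jensen's inequality over $v$ and pigeonhole over the $(1 - \alpha)\binom{n}{2}$ non-edges then produce one missing edge contained in at least $\beta^{2}n$ neighbourhoods, where $\beta = \beta_{2}(\alpha) = 1 - \sqrt{1 - \alpha} \geqslant \alpha/2$. This yields $\omega(G) \geqslant (1 - \sqrt{1 - \alpha})^{2} n \geqslant \alpha^{2}n/4$, which is stronger than the stated $\alpha^{2}n/10$; so the single global count of missing edges both closes your gap and improves the constant.
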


\noindent This was recently improved by Holmsen~\cite{Holmsen2019} (note that $1 - \sqrt{1 - \alpha} \geqslant \alpha/2$ for $\alpha \in [0, 1]$).

\begin{proposition}[Holmsen,~\cite{Holmsen2019}]\label{H}
	Let $G$ be a graph on $n$ vertices with $\alpha \binom{n}{2}$ edges. If $G$ does not contain an induced $K_{2, 2}$, then $\omega(G) \geqslant (1 - \sqrt{1 - \alpha})^{2}n$.
\end{proposition}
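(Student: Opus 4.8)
The plan is to extract the one combinatorial consequence of forbidding an induced $K_{2,2}$ and then feed it into a K\H{o}v\'{a}ri--S\'{o}s--Tur\'{a}n-type count. The key observation is that if $u$ and $v$ are non-adjacent, then any two common neighbours of $u$ and $v$ must themselves be adjacent, for otherwise those four vertices would induce a $K_{2,2}$. Hence for every non-edge $uv$ the set $N(u) \cap N(v)$ induces a clique, and in particular $\ssize{N(u) \cap N(v)} \leqslant \omega$, writing $\omega := \omega(G)$. (Equivalently, the complement $\overline{G}$ has no induced $2K_{2}$.) I would isolate this as the opening lemma, since the entire argument rests on it.

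Next I would double count cherries, that is, paths $u - w - v$ of length two. Counting by the central vertex $w$ gives $\sum_{w} \binom{d(w)}{2}$, which by convexity of $\binom{\cdot}{2}$ is at least $n\binom{\alpha(n-1)}{2}$, of order $\alpha^{2}n^{3}$. Counting instead by the endpoint pair $\{u, v\}$ splits the cherries according to whether $u$ and $v$ are adjacent. The contribution of the non-adjacent pairs is $\sum_{uv \notin E} \ssize{N(u) \cap N(v)}$, and by the lemma this is at most $\omega (1 - \alpha)\binom{n}{2}$, since there are $(1 - \alpha)\binom{n}{2}$ non-edges and each contributes at most $\omega$. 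This is exactly the K\H{o}v\'{a}ri--S\'{o}s--Tur\'{a}n strategy, with the usual codegree bound of $1$ replaced by $\omega$.

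The main obstacle is that the comparison also involves the cherries whose endpoints are adjacent, i.e. the triangles, and forbidding an induced $K_{2,2}$ gives no bound at all on the codegree of an edge (five edges on four vertices do not form an induced $C_{4}$). A clique illustrates the difficulty: every one of its cherries sits inside a triangle, so the non-adjacent cherries alone cannot drive the argument. To control the triangle term I would pass to a maximum clique $K$ with $\ssize{K} = \omega$ and decompose the edges of $G$ into those inside $K$, those between $K$ and $R := V \setminus K$, and those inside $R$. Maximality of $K$ forces every vertex of $R$ to miss at least one vertex of $K$, which bounds the cross-edges by $(\omega - 1)(n - \omega)$, while the clique-codegree lemma, now applied inside $R$, is what I would use to keep the edges of $R$ under control; the complementary statement that $\overline{G}$ is $2K_{2}$-free (so that the traces on $K$ of the non-neighbourhoods are nested) is the structural tool I expect to need to package these estimates cleanly.

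Finally I would aim to combine these counts into a single inequality that is quadratic in $\omega/n$. The appearance of $\sqrt{1 - \alpha}$ in the statement should come precisely from the discriminant of that quadratic: its relevant root is $1 - \sqrt{1 - \alpha}$, and the squaring in the bound $(1 - \sqrt{1 - \alpha})^{2}n$ should reflect the one lossy step where a codegree or a cross-count is estimated through a Cauchy--Schwarz or arithmetic--geometric-mean inequality, so that one only recovers $\sqrt{\omega/n}$ rather than $\omega/n$ directly. The delicate point, and where I would spend most of the effort, is arranging the edges of $R$ and the $K$--$R$ cross-edges so that the two bounds fuse into a clean quadratic with no stray triangle term left over.
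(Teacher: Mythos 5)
Your opening lemma is exactly the right structural fact --- the common neighbourhood of a non-edge induces a clique, so every non-edge has codegree at most $\omega$ --- and counting over the $(1 - \alpha)\binom{n}{2}$ non-edges is the right frame. But the argument as proposed has a genuine gap that the sketched repair does not close. Your only lower bound is on the \emph{total} cherry count $\sum_{w} \binom{d(w)}{2}$, and in the dense regime ($\alpha$ bounded away from $0$) the adjacent-endpoint cherries, i.e.\ the triangles, can number $\Theta(n^{3})$ and dominate it; the non-adjacent contribution $\omega(1 - \alpha)\binom{n}{2}$ is also of order $n^{3}$ when $\omega = \Theta(n)$, so comparing the two yields no contradiction, and, as you yourself note, the hypothesis gives no bound on the codegree of an edge. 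The maximum-clique decomposition you propose to absorb the triangle term is not carried out, and its main tool is weaker than you state: the nestedness of the non-neighbourhood traces on a clique (from $\overline{G}$ being $2K_{2}$-free) holds only for \emph{adjacent} pairs $u, v$, so it does not control $e(G[R])$ or the triangles inside $R$. (Note also that the paper's Section~\ref{Triangles} bounds triangles only for $H$-free graphs, where the triangle count is $o(n^{2})$ --- useless here, since a dense graph with large clique number is exactly the situation of Proposition~\ref{H}.)

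The missing idea, which is how the paper proves this (Theorem~\ref{main} with $t = 2$ and $H = K_{r + 1}$; $\beta_{2}(\alpha) = 1 - \sqrt{1 - \alpha}$), is to lower-bound the \emph{non-adjacent} cherries directly, bypassing triangles altogether. In each neighbourhood $\Gamma(v)$ take a maximal collection of $\gamma_{v}$ pairwise vertex-disjoint non-edges: by maximality the leftover vertices of $\Gamma(v)$ form a clique, so if $\omega(G) \leqslant r$ then $\gamma_{v} \geqslant \frac{1}{2}(\deg(v) - r)$; and since two disjoint non-edges of $\Gamma(v)$ with all four cross pairs present would induce a $K_{2, 2}$, every pair of them spans at least one further non-edge, whence $m_{v} \geqslant \gamma_{v} + \binom{\gamma_{v}}{2}$, where $m_{v}$ is the number of non-edges inside $\Gamma(v)$. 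Averaging via Jensen and pigeonholing $\sum_{v} m_{v}$ over the $(1 - \alpha)\binom{n}{2}$ non-edges produces a single non-edge whose common neighbourhood has size at least $\beta^{2} n$, and by your own lemma that set is a clique. The quadratic you correctly anticipated arises here, but from the matching count rather than from any Cauchy--Schwarz step: one needs $\alpha - \beta^{2} = 2\sqrt{1 - \alpha}\,\beta$, whose relevant root is $\beta = 1 - \sqrt{1 - \alpha}$ (this is Lemma~\ref{calc} with $t = 2$). So your paragraphs one and two contain genuine ingredients of the proof, but the pivot in paragraph three goes in an unpromising direction, and the argument as it stands does not establish the proposition.
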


\noindent This result has the added advantage that $(1 - \sqrt{1 - \alpha})^{2} \rightarrow 1$ as $\alpha \rightarrow 1$, so it is approximately tight as $\alpha \rightarrow 1$. The arguments in this paper are motivated by Holmsen's.

Our main result is Theorem~\ref{main}, which is an extension to the situation where $G$ does not contain an induced $K_{2, t}$ and also considers whether $G$ contains some general subgraph (in place of a clique). For comparison with Proposition~\ref{H}, we state the special case of the clique (we believe this result is also in a sense tight as $\alpha \rightarrow 1$ -- see Remark~\ref{remark}). First, it will be convenient to define a constant $\beta$ depending on $\alpha$ and $t$.

\begin{definition}
	Given $\alpha \in [0, 1]$ and an integer $t \geqslant 2$, define
	\begin{equation*}
		\beta_{t}(\alpha) = \frac{t}{2 \sqrt{t - 1}} \Bigl[\sqrt{1 - \bigl(1 - \tfrac{2}{t}\bigr)^{2} \alpha} - \sqrt{1 - \alpha}\Bigr].
	\end{equation*}
\end{definition}

\noindent Note that $\beta_{2}(\alpha) = 1 - \sqrt{1 - \alpha}$ so Proposition~\ref{H} can be stated as: if $G$ is a graph on $n$ vertices with $\alpha \binom{n}{2}$ edges containing no induced $K_{2, 2}$, then $\omega(G) \geqslant \beta_{2}(\alpha)^{2} n$.

\begin{restatable}{theorem}{cliquethm}\label{cliquethm}
	Let $G$ be a graph on $n$ vertices with $\alpha \binom{n}{2}$ edges containing no induced $K_{2, t}$ and let $\beta = \beta_{t}(\alpha)$. For any positive integer $r$ with $R(t, r) \leqslant \beta^{2} n$, we have $\omega(G) \geqslant r + 1$.
\end{restatable}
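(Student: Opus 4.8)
The plan is to isolate the one structural consequence of forbidding an induced $K_{2,t}$ and then feed a large witness of it into Ramsey's theorem. First I would record the elementary fact: if $u,v$ are \emph{non-adjacent}, then $N(u)\cap N(v)$ contains no independent set of size $t$. Indeed, $t$ pairwise non-adjacent common neighbours of the non-adjacent pair $\{u,v\}$ would be exactly an induced $K_{2,t}$. Granting this, the theorem reduces to a purely extremal statement: it suffices to exhibit a single non-adjacent pair $\{u,v\}$ with $|N(u)\cap N(v)|\ge \beta^{2} n$. For then $|N(u)\cap N(v)|\ge \beta^{2}n \ge R(t,r)$, and since this set spans no independent set of size $t$, Ramsey's theorem produces a clique of size $r$ inside $N(u)\cap N(v)$; as every vertex of $N(u)\cap N(v)$ is adjacent to $u$, adjoining $u$ gives a clique of size $r+1$, so $\omega(G)\ge r+1$. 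When $t=2$ the forbidden independent set has size $2$, so $N(u)\cap N(v)$ is itself a clique and one recovers the mechanism behind Holmsen's result (Proposition~\ref{H}).

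So the entire content is the search for a non-adjacent pair with $\ge \beta^{2}n$ common neighbours. The natural first move is a global double count of cherries (paths $u\,\text{--}\,w\,\text{--}\,v$). Writing $D=\sum_{w}\binom{d_{w}}{2}$, convexity gives $D\ge n\binom{\bar d}{2}$ with $\bar d=\alpha(n-1)$ the average degree, so cherries are plentiful and the \emph{average} pair already has on the order of $\alpha^{2}n$ common neighbours. Splitting $D=P_{1}+P_{2}$ by whether the endpoints are adjacent, with $P_{2}=\sum_{\{u,v\}\notin E}|N(u)\cap N(v)|$ and $P_{1}=\sum_{\{u,v\}\in E}|N(u)\cap N(v)|=3\cdot(\text{number of triangles})$, the goal is to force $P_{2}$ above $\beta^{2}n\cdot(1-\alpha)\binom{n}{2}$.

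The obstacle is that $P_{1}$ carries no bound from the hypothesis — a clique is induced-$K_{2,t}$-free yet triangle-rich — so the adjacent-pair contribution cannot simply be discarded, and it is precisely the adjacent pairs (whose common neighbourhoods need not avoid large independent sets) that are useless for the Ramsey step. The way through that I would pursue is to localise: fix a vertex of near-maximal degree and average $|N(u)\cap N(v)|$ over the non-neighbours $u$ of $v$ (so $v$ itself lies in every such common neighbourhood and non-adjacency is guaranteed), using the structural fact to control the edges lying inside common neighbourhoods via the Tur\'{a}n-type bound $e(G[S])\gtrsim \binom{|S|}{2}/(t-1)$ that holds for any $S$ with no independent set of size $t$. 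Optimising the resulting estimate over the relevant degree (equivalently, a density parameter) is what manufactures the exact threshold: the two square-root terms $\sqrt{1-(1-\tfrac{2}{t})^{2}\alpha}$ and $\sqrt{1-\alpha}$ are the boundary values of this quadratic optimisation, and the prefactor $\tfrac{t}{2\sqrt{t-1}}$ should emerge from the $1/(t-1)$ edge density together with the size-$2$-versus-size-$t$ asymmetry encoded in $(1-\tfrac{2}{t})$.

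The main obstacle, then, is this optimisation: extracting from the cherry count a single non-adjacent pair whose common neighbourhood attains the \emph{sharp} value $\beta^{2}n$, while genuinely accounting for — rather than crudely bounding — the triangle contribution, and pinning the constant down to $\beta_{t}(\alpha)$ exactly. Two checks would discipline any candidate argument: specialising to $t=2$, where the bound must collapse to Holmsen's $(1-\sqrt{1-\alpha})^{2}n$, and the limit $\alpha\to 1$, where $\beta_{t}(\alpha)\to 1$ forces the estimate to recognise that $G$ is almost complete.
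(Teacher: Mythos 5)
Your endgame is exactly the paper's: for a non-adjacent pair $\{u, v\}$ the common neighbourhood $N(u) \cap N(v)$ spans no independent $t$-set, so once it has size at least $\beta^{2} n \geqslant R(t, r)$, Ramsey gives a $K_{r}$ inside it and adjoining $u$ yields $\omega(G) \geqslant r + 1$. But the heart of the theorem --- producing such a pair --- is precisely what you leave unproved, and you say so yourself (``the main obstacle, then, is this optimisation''). Worse, the statement you reduce to is false as phrased: it is not true that every induced-$K_{2,t}$-free graph with $\alpha \binom{n}{2}$ edges contains a non-adjacent pair with at least $\beta^{2} n$ common neighbours. A disjoint union of cliques of size roughly $\alpha n$ has about $\alpha \binom{n}{2}$ edges, contains no induced $K_{2,t}$, and \emph{every} non-adjacent pair in it has zero common neighbours. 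Such a pair can only be found under the contradiction hypothesis $\omega(G) \leqslant r$, and nothing in your cherry-count plan invokes that hypothesis: your outline makes no use of $r$ or of any Ramsey number before the final step, so it could never produce a threshold that depends on $R(t, r) \leqslant \beta^{2} n$. In the paper, Ramsey is used \emph{twice} --- once at the end, as you do, but also once inside every neighbourhood, and that second use is what drives the count.

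Concretely, the missing argument (the paper proves Theorem~\ref{main}, of which this theorem is the case $H = K_{r+1}$, so $\{H - x\} = \{K_{r}\}$) runs as follows. Suppose $\omega(G) \leqslant r$. In each $G_{v} = G[\Gamma(v)]$ take a \emph{maximal} collection $\bar{\Delta}_{1}, \dotsc, \bar{\Delta}_{\gamma_{v}}$ of pairwise vertex-disjoint independent $t$-sets. The leftover $\Gamma(v) \setminus \bigcup_{j} \bar{\Delta}_{j}$ contains no independent $t$-set (maximality) and no $K_{r}$ (since $G$ has no $K_{r+1}$), so it has fewer than $R(t, r) \leqslant \beta^{2} n$ vertices, whence $\gamma_{v} \geqslant \frac{1}{t}\bigl(\deg(v) - \beta^{2}(n - 1)\bigr)$. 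The induced-$K_{2,t}$-freeness is then applied \emph{across pairs of the packed sets}: for $i \neq j$ at most one vertex of $\bar{\Delta}_{i}$ is complete to $\bar{\Delta}_{j}$ (two such vertices are non-adjacent and would form an induced $K_{2,t}$ with $\bar{\Delta}_{j}$), so at least $t - 1$ missing edges run between each pair, giving $m_{v} \geqslant \binom{t}{2}\gamma_{v} + (t - 1)\binom{\gamma_{v}}{2}$, where $m_{v}$ counts the missing edges inside $\Gamma(v)$. Note that $\sum_{v} m_{v}$ is exactly your $P_{2}$: the paper lower-bounds the non-adjacent cherry count directly and never estimates $P_{1}$, so the triangle obstruction you worry about simply never arises. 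Jensen applied to the convex increasing $q(x) = \frac{t-1}{2} x (x + t - 1)$, together with the identity $(t - 1)(\alpha - \beta^{2})^{2} = t^{2}(1 - \alpha)\beta^{2}$ of Lemma~\ref{calc} --- this algebraic identity, not a degree optimisation at a near-maximal-degree vertex, is the actual source of the two square roots in $\beta_{t}(\alpha)$ --- yields $\sum_{v} m_{v} \geqslant \beta^{2}(1 - \alpha)\binom{n}{2} n$, and averaging over the $(1 - \alpha)\binom{n}{2}$ missing edges produces the desired non-adjacent pair. Your proposed ingredient $e(G[S]) \gtrsim \binom{|S|}{2}/(t - 1)$ for sets with no independent $t$-set points in the wrong direction: what is needed is a lower bound on \emph{missing} edges inside neighbourhoods, and it comes from the cross-pair condition above, not from within-set edge density.
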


\noindent Here $R(t, r)$ denotes the usual Ramsey number. It is natural for Ramsey numbers to appear in the statement. The class of graphs with ``no induced $K_{2, t}$'' includes those with ``no independent $t$-set'' and if $\omega(G) \geqslant r + 1$ for all such graphs, then $R(t, r + 1) \leqslant n$.

Since $R(2, r) = r$, Theorem~\ref{cliquethm} is exactly Holmsen's result when $t = 2$. In Section~\ref{cliques}, using known Ramsey number bounds we prove explicit lower bounds for the clique number for all $t$. As an illustration, we state the case $t = 3$, which is particularly clean.

\begin{theorem}\label{K23thm}
	Let $G$ be a graph on $n$ vertices with $\alpha \binom{n}{2}$ edges. If $G$ does not contain an induced $K_{2, 3}$, then
	\begin{align*}
		\omega(G) & \geqslant \bigl\lfloor \tfrac{2}{3} \alpha \sqrt{n} \bigr\rfloor \text{ for all } n, \text{ and} 
		\\
		\omega(G) & \geqslant \tfrac{1}{3} \alpha \sqrt{n \log n} + 2 \text{ for large enough }n \text{ in terms of } \alpha.
	\end{align*}
\end{theorem}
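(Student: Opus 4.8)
The plan is to apply \Cref{cliquethm} with $t = 3$, for which the relevant quantity is $\beta = \beta_{3}(\alpha) = \frac{3}{2\sqrt{2}}\bigl[\sqrt{1 - \tfrac{\alpha}{9}} - \sqrt{1 - \alpha}\bigr]$. The whole argument reduces to two ingredients: a good lower bound on $\beta^{2}$ in terms of $\alpha$, and an upper bound on the Ramsey number $R(3, r)$ so that we may take $r$ as large as possible subject to $R(3, r) \leqslant \beta^{2} n$.

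First I would simplify $\beta$. Rationalising the difference of square roots,
\[
	\sqrt{1 - \tfrac{\alpha}{9}} - \sqrt{1 - \alpha} = \frac{8\alpha/9}{\sqrt{1 - \alpha/9} + \sqrt{1 - \alpha}} \geqslant \frac{8\alpha/9}{2} = \frac{4\alpha}{9},
\]
since both square roots are at most $1$; hence $\beta_{3}(\alpha) \geqslant \frac{3}{2\sqrt{2}} \cdot \frac{4\alpha}{9} = \frac{\sqrt{2}}{3}\alpha$ and so $\beta^{2} \geqslant \frac{2}{9}\alpha^{2}$. Crucially the inequality is \emph{strict} for $\alpha > 0$ (the denominator is strictly below $2$), which will supply the slack needed for the second bound.

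For the first (all-$n$) bound I would use the classical estimate $R(3, r) \leqslant \binom{r + 1}{2} = \tfrac{1}{2}r(r + 1)$. Setting $r + 1 = \bigl\lfloor \tfrac{2}{3}\alpha\sqrt{n} \bigr\rfloor$ gives $(r + 1)^{2} \leqslant \tfrac{4}{9}\alpha^{2} n$, whence $R(3, r) \leqslant \tfrac{1}{2}(r + 1)^{2} \leqslant \tfrac{2}{9}\alpha^{2} n \leqslant \beta^{2} n$; \Cref{cliquethm} then yields $\omega(G) \geqslant r + 1 = \bigl\lfloor \tfrac{2}{3}\alpha\sqrt{n} \bigr\rfloor$ (the cases where this floor is at most $1$ being trivial).

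For the second bound I would replace the crude Ramsey estimate by Shearer's bound $R(3, r) \leqslant (1 + o(1))\, r^{2}/\log r$ (logarithms being natural). Taking $r = \bigl\lceil \tfrac{1}{3}\alpha\sqrt{n \log n} \bigr\rceil + 1$, so that $r = \tfrac{1}{3}\alpha\sqrt{n \log n}\,(1 + o(1))$ and $\log r = \tfrac{1}{2}\log n\,(1 + o(1))$, a short computation gives $R(3, r) \leqslant \tfrac{2}{9}\alpha^{2} n\,(1 + o(1))$. This is the delicate point: the leading term $\tfrac{2}{9}\alpha^{2} n$ exactly matches the lower bound for $\beta^{2} n$, so the conclusion would fail if we only knew $\beta^{2} = \tfrac{2}{9}\alpha^{2}$. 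Here the strict inequality $\beta^{2} > \tfrac{2}{9}\alpha^{2}$ saves us: writing $\beta^{2} = \tfrac{2}{9}\alpha^{2} + c$ with $c = c(\alpha) > 0$ fixed, the $o(1)$ error is eventually dominated by $c$, so $R(3, r) \leqslant \beta^{2} n$ for all large $n$ (in terms of $\alpha$), and \Cref{cliquethm} gives $\omega(G) \geqslant r + 1 \geqslant \tfrac{1}{3}\alpha\sqrt{n \log n} + 2$. The main obstacle is thus this asymptotic matching, which forces one to track the $o(1)$ against a fixed positive gap rather than simply to compare leading constants.
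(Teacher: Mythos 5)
Your proposal is correct and follows essentially the same route as the paper: apply Theorem~\ref{cliquethm} with $t = 3$, using the Erd\H{o}s--Szekeres bound $R(3, r) \leqslant \binom{r + 1}{2}$ for the first estimate and Shearer's bound for the second, together with $\beta_{3}(\alpha) \geqslant \frac{\sqrt{2}}{3} \alpha$, which you rederive directly but which is the $t = 3$ case of Lemma~\ref{calc}. The only divergence is bookkeeping in the second bound: the paper keeps $\beta$ inside the Ramsey computation (taking $r = \bigl\lfloor \beta \sqrt{\tfrac{1}{2} n \log n} \bigr\rfloor + 2$ with the explicit form $R(3, r) \leqslant \frac{(r - 2)^{2}}{\log(r - 1) - 1}$, which yields the explicit threshold $n \geqslant \exp(2e^{2} \beta^{-2})$ and the stronger intermediate bound $\beta \sqrt{\tfrac{1}{2} n \log n} + 2$), whereas you aim at the $\alpha$-form directly and recover the needed slack from the strict inequality $\beta^{2} > \tfrac{2}{9} \alpha^{2}$ --- a valid, if less quantitative, alternative.
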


\noindent The regime where $\alpha$ goes to zero is closely related to the following natural question first proposed by Loh, Tait, Timmons and Zhou~\cite{LTTZ2017}. Consider a graph $G$ on $n$ vertices with $\alpha \binom{n}{2}$ edges containing no induced $K_{2, t}$ -- how large must $\alpha$ be to ensure that some fixed graph $H$ is a subgraph of $G$? If we do not ban $G$ from containing an induced $K_{2, t}$ then the answer follows from the theorem of Erd\H{o}s and Stone~\cite{ErdosStone1946} (see Erd\H{o}s and Simonovits~\cite{ErdosSimonovtis1966}): $\alpha = 1 - \frac{1}{\chi(H) - 1} + o(1)$ where $\chi(H)$ is the chromatic number of $H$. However forbidding $G$ from containing an induced $K_{2, t}$ (ruling out Tur\'{a}n-style graphs) changes the answer drastically. In particular we will see that the required $\alpha$ grows like $n^{-1/2}$, that is, the required number of edges grows like $n^{3/2}$.

Loh, Tait, Timmons and Zhou introduced the notion of an \emph{induced Tur\'{a}n number}: define
\begin{equation*}
	\operatorname{ex}(n, \{H, F\textnormal{-ind}\})
\end{equation*}
to be the maximum number of edges in a graph on $n$ vertices which does not contain $H$ as a subgraph and does not contain $F$ as an induced subgraph. In this paper we focus on $F = K_{2, t}$, which was also considered by Loh, Tait, Timmons and Zhou. We will give some improvements to their results. The important case where $H$ is an odd cycle has been resolved by Ergemlidze, Gy\H{o}ri and Methuku~\cite{EGM2019}.

\begin{proposition}[Loh-Tait-Timmons-Zhou,~\cite{LTTZ2017}]\label{LTTZclique}
	Let $t \geqslant 3$ be an integer and $G$ be a graph on $n$ vertices within minimum degree $d$. If $G$ does not contain an induced $K_{2, t}$, then
	\begin{equation*}
		\omega(G) \geqslant \biggl(\frac{d^{2}}{2n(t - 1)} (1 - o(1))\biggr)^{\frac{1}{t - 1}} - t + 1.
	\end{equation*}
\end{proposition}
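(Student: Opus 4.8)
The plan is to find a pair of \emph{non-adjacent} vertices with many common neighbours and then pull a clique out of their common neighbourhood using Ramsey's theorem. The key reformulation of the hypothesis is that $G$ contains no induced $K_{2, t}$ if and only if, for every pair of non-adjacent vertices $x, y$, the common neighbourhood $N(x) \cap N(y)$ contains no independent set of size $t$: two non-adjacent vertices $x, y$ together with an independent $t$-set inside $N(x) \cap N(y)$ would induce a $K_{2, t}$, with parts $\{x, y\}$ and the $t$-set. This is the only point at which the forbidden induced subgraph enters.

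The extraction step is the following elementary Ramsey estimate: if $H$ is a graph on $s$ vertices containing no independent set of size $t$, then $\omega(H) \geqslant (s/(t - 1))^{1/(t - 1)} - t + 1$. Indeed, set $c = \omega(H)$; since $H$ has no clique of size $c + 1$ and no independent set of size $t$, it has fewer than $R(c + 1, t)$ vertices, so $s < R(c + 1, t) \leqslant \binom{c + t - 1}{t - 1} \leqslant (t - 1)(c + t - 1)^{t - 1}$, using the Erd\H{o}s--Szekeres bound and the crude inequality $1/(t - 1)! \leqslant t - 1$. Rearranging gives the claim, and it is exactly this last crude step that produces the constant $\tfrac{1}{2(t - 1)}$ in the statement rather than something sharper.

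It remains to find the heavy non-adjacent pair. Double counting cherries (paths on three vertices) gives $\sum_{\{x, y\}} |N(x) \cap N(y)| = \sum_{v} \binom{d_v}{2} \geqslant n \binom{d}{2}$, the left-hand sum running over all unordered pairs. Provided at least half of this total comes from non-adjacent pairs, averaging over the at most $\binom{n}{2}$ non-adjacent pairs produces one whose common neighbourhood has size at least $\tfrac{1}{2} n \binom{d}{2} / \binom{n}{2} = \tfrac{d(d - 1)}{2(n - 1)} = \tfrac{d^2}{2n}(1 - o(1))$. Substituting this value of $s$ into the extraction step yields exactly $\omega(G) \geqslant \bigl(\tfrac{d^2}{2n(t - 1)}(1 - o(1))\bigr)^{1/(t - 1)} - t + 1$.

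The main obstacle is the complementary case, when more than half the cherries have adjacent endpoints; then averaging over non-adjacent pairs need not find a heavy one, since for an \emph{edge} $xy$ the set $N(x) \cap N(y)$ may have large independent sets and the reformulation above does not apply. But this case forces $G$ to be triangle-rich (more than $nd^2/12$ triangles, as the adjacent cherries equal $3$ times the number of triangles), whence some neighbourhood $G[N(v)]$ carries more than $d^2/4$ edges and is therefore dense. Being induced-$K_{2, t}$-free and dense, $G[N(v)]$ already contains a clique comfortably larger than the target — this is precisely the regime of Theorem~\ref{cliquethm}, applied inside $G[N(v)]$. The genuine work is in reconciling the two cases against the single clean bound; the factor $\tfrac{1}{2}$ in the averaging (as opposed to the full $d^2/n$ one gets on ignoring adjacency) is exactly the slack that lets the triangle-rich case be absorbed.
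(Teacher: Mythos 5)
A preliminary remark: the paper does not prove Proposition~\ref{LTTZclique} --- it is quoted from Loh, Tait, Timmons and Zhou~\cite{LTTZ2017} without proof --- so your proposal must stand on its own. Its first half does: the reformulation (no induced $K_{2,t}$ means the common neighbourhood of any \emph{non-adjacent} pair contains no independent $t$-set), the cherry count $\sum_{v}\binom{d_{v}}{2} \geqslant n\binom{d}{2}$, and the extraction step $s < R(c + 1, t) \leqslant \binom{c + t - 1}{t - 1} \leqslant (t - 1)(c + t - 1)^{t - 1}$, giving $\omega \geqslant (s/(t-1))^{1/(t-1)} - t + 1$, are all correct, and when at least half the cherries have non-adjacent endpoints this does yield the stated bound.

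The genuine gap is your triangle-rich case. From $3T > \tfrac{1}{2} n \binom{d}{2}$ you obtain by unweighted averaging a vertex $v$ with $e(G_{v}) > \tfrac{1}{4}d(d-1)$ and declare $G[N(v)]$ ``dense''. That inference is false: $d$ is only the \emph{minimum} degree, so $\lvert N(v) \rvert = d_{v}$ may be as large as $n - 1$, and $d^{2}/4$ edges on $d_{v}$ vertices is edge density $\alpha_{v} \approx d^{2}/(2d_{v}^{2})$, which is $o(1)$ whenever $d = o(d_{v})$. Applying Theorem~\ref{cliquethm} inside $G_{v}$ then gives a clique of order only about $\bigl(\tfrac{t-1}{t^{2}} \alpha_{v}^{2} d_{v}\bigr)^{1/(t-1)} \approx (d^{4}/d_{v}^{3})^{1/(t-1)}$, which falls below the target $(d^{2}/n)^{1/(t-1)}$ as soon as $d_{v}^{3} \gg d^{2} n$: for instance $d = n^{3/5}$ and $d_{v} = \Theta(n)$ give $d^{4}/d_{v}^{3} = n^{-3/5}$ against $d^{2}/n = n^{1/5}$. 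A natural repair is \emph{weighted} averaging --- $\sum_{v} e(G_{v}) > \tfrac{1}{2}\sum_{v}\binom{d_{v}}{2}$ forces some $v$ with $e(G_{v}) > \tfrac{1}{2}\binom{d_{v}}{2}$, a genuinely dense neighbourhood on $d_{v} \geqslant d$ vertices --- and since $d \geqslant d^{2}/n$ this wins comfortably when $d = o(n)$. But in the regime $d = \Theta(n)$ the two cases differ by constant factors \emph{inside} the $(\cdot)^{1/(t-1)}$, and the $(1 - o(1))$ in the statement cannot absorb a constant-factor deficit: already for $t = 3$, a density-$\tfrac{1}{2}$ neighbourhood on $d_{v} = d$ vertices yields via $R(3, r) \leqslant \binom{r+1}{2}$ only $\omega \gtrsim \beta_{3}(\tfrac{1}{2})\sqrt{2d} \approx 0.40\sqrt{d}$, short of the target $\sqrt{d^{2}/(4n)} \approx \tfrac{1}{2}\sqrt{d}$ when $d$ is close to $n$. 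So the dichotomy as set up does not close; your closing sentence, that ``the genuine work is in reconciling the two cases'', concedes exactly the part that is missing, and the claim that the factor $\tfrac{1}{2}$ ``is exactly the slack that lets the triangle-rich case be absorbed'' is unsubstantiated and, on the computation above, false as stated.
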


\noindent A graph with $\alpha \binom{n}{2}$ edges has average degree $\alpha(n - 1)$ and has a subgraph of minimum degree at least $\alpha(n - 1)/2$. Thus one should view $d$ as being between $\alpha(n - 1)/2$ and $\alpha(n - 1)$. We improve the dependence upon $t$ for all $\alpha$ as well as adding a $(\log n)^{1 - \frac{1}{t - 1}}$ factor for constant $\alpha > 0$.

\begin{theorem}\label{K2tthm}
	Let $G$ be a graph on $n$ vertices with $\alpha \binom{n}{2}$ edges. If $G$ does not contain an induced $K_{2, t}$, then
	\begin{align*}
		\omega(G) & \geqslant \bigl\lfloor \tfrac{t - 1}{4} (\alpha^{2} n)^{\frac{1}{t - 1}}\bigr\rfloor - t + 3 \text{ for all } n, \text{ and} \\
		\omega(G) & \geqslant \tfrac{1}{20 t} \bigl(\alpha^{2} n (\log n)^{t - 2}\bigr)^{\frac{1}{t - 1}} \text{ for large enough } n \text{ in terms of } \alpha.
	\end{align*}
\end{theorem}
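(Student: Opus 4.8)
The plan is to derive both inequalities from Theorem~\ref{cliquethm} by substituting two different upper bounds for the Ramsey number $R(t, r)$: the first (all $n$) bound uses the elementary Erd\H{o}s--Szekeres estimate, while the second (large $n$) bound uses a sharper off-diagonal Ramsey bound that saves a factor of $(\log r)^{t-2}$. Before either, I would replace the unwieldy constant $\beta = \beta_{t}(\alpha)$ by a clean lower bound. Rationalising the difference of square roots gives
\[
	\sqrt{1 - \bigl(1 - \tfrac{2}{t}\bigr)^{2}\alpha} - \sqrt{1-\alpha} = \frac{\alpha\bigl(1 - (1-\frac{2}{t})^{2}\bigr)}{\sqrt{1 - (1-\frac{2}{t})^{2}\alpha} + \sqrt{1-\alpha}} \geqslant \frac{\alpha}{2}\cdot\frac{4(t-1)}{t^{2}},
\]
since the denominator is at most $2$ and $1 - (1-\frac{2}{t})^{2} = \frac{4(t-1)}{t^{2}}$. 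Multiplying by $\frac{t}{2\sqrt{t-1}}$ yields $\beta_{t}(\alpha) \geqslant \frac{\alpha\sqrt{t-1}}{t}$, so that $\beta^{2} n \geqslant \frac{(t-1)\alpha^{2} n}{t^{2}}$. This step is routine algebra.

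For the first bound I would use $R(t, r) \leqslant \binom{r+t-2}{t-1}$ and choose $r$ so that $r + t - 2 = \lfloor\tfrac{t-1}{4}(\alpha^{2} n)^{1/(t-1)}\rfloor$ (the interesting case being when this floor is at least $t-1$; otherwise the claimed bound is at most $1$ and holds trivially). Then $\binom{r+t-2}{t-1} \leqslant \frac{(r+t-2)^{t-1}}{(t-1)!} \leqslant \frac{1}{(t-1)!}\bigl(\frac{t-1}{4}\bigr)^{t-1}\alpha^{2} n$, and to apply Theorem~\ref{cliquethm} it suffices that this be at most $\frac{(t-1)\alpha^{2} n}{t^{2}} \leqslant \beta^{2} n$. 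This reduces to the purely numerical inequality $t^{2}(t-1)^{t-2} \leqslant 4^{t-1}(t-1)!$, which is an equality at $t=2$ and is easily verified for all $t \geqslant 2$ by induction (or Stirling). Theorem~\ref{cliquethm} then gives $\omega(G) \geqslant r + 1 = \lfloor\tfrac{t-1}{4}(\alpha^{2} n)^{1/(t-1)}\rfloor - t + 3$.

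For the second bound I would instead invoke a standard off-diagonal Ramsey estimate $R(t, r) \leqslant c_{t}\, r^{t-1}/(\log r)^{t-2}$ (valid for $r$ large in terms of $t$) and take $r$ to be essentially the target value $\frac{1}{20t}(\alpha^{2} n (\log n)^{t-2})^{1/(t-1)}$. The point is that $r$ is of order $n^{1/(t-1)}$, so $\log r = (1 + o(1))\frac{\log n}{t-1}$, and hence $(\log r)^{t-2}$ cancels the factor $(\log n)^{t-2}$ hidden inside $r^{t-1}$; what remains is $R(t,r) \leqslant (1+o(1))\,c_{t}\,\frac{(t-1)^{t-2}}{(20t)^{t-1}}\,\alpha^{2} n$. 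Since $\beta^{2} n \geqslant \frac{(t-1)\alpha^{2} n}{t^{2}}$, this is at most $\beta^{2} n$ provided $c_{t}(t-1)^{t-3}t^{2} \leqslant (1-o(1))\,20^{t-1}t^{t-1}$, which holds once $n$ (and hence $r$) is large in terms of $\alpha$ because the constant $\frac{1}{20t}$ was chosen wastefully. Theorem~\ref{cliquethm} then yields $\omega(G) \geqslant r + 1$, exceeding the claimed bound.

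The main obstacle is the constant-chasing in this last step: one must pin down an explicit off-diagonal Ramsey bound with a controlled constant $c_{t}$, and handle the mild circularity that $\log r$ is only \emph{asymptotically} $\frac{\log n}{t-1}$ — which is exactly why the conclusion is stated only for $n$ large in terms of $\alpha$. The generous factor $\frac{1}{20t}$ is precisely what absorbs $c_{t}$ together with the $o(1)$ slack; a sharper analysis (for instance Shearer's bound for $t=3$) improves the constant, as reflected in Theorem~\ref{K23thm}.
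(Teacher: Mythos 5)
Your proposal is correct and follows essentially the same route as the paper's proof (given via Corollary~\ref{clique}): both bounds come from Theorem~\ref{cliquethm} via $\beta_{t}(\alpha) \geqslant \frac{\sqrt{t-1}}{t}\alpha$ (which the paper proves in Lemma~\ref{calc} by convexity, you by rationalising --- an equivalent step), with the Erd\H{o}s--Szekeres bound $R(t,r) \leqslant \binom{r+t-2}{t-1}$ for all $n$ and an $R(t,r) \leqslant c_{t}\, r^{t-1}/(\log r)^{t-2}$ bound for large $n$. The one loose end you flag, pinning down $c_{t}$, is resolved exactly as you anticipate: the paper cites Bollob\'{a}s (Thm 12.17) with $c_{t} = 2 \cdot 20^{t-3}$, which satisfies your sufficient inequality $c_{t}(t-1)^{t-3}t^{2} \leqslant (1-o(1))\,20^{t-1}t^{t-1}$ with ample room.
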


\noindent Finally, Loh, Tait, Timmons and Zhou gave a general upper bound for $\operatorname{ex}(n, \{H, F\textnormal{-ind}\})$ when $F = K_{2, t + 1}$.

\begin{proposition}[Loh-Tait-Timmons-Zhou,~\cite{LTTZ2017}]\label{LTTZindtur}
	Fix a graph $H$ with $v_{H}$ vertices. For any integer $t \geqslant 2$,
	\begin{equation*}
		\operatorname{ex}(n, \{H, K_{2, t + 1}\textnormal{-ind}\}) < (\sqrt{2} + o(1)) t^{\frac{1}{2}} (v_{H} + t)^{\frac{t}{2}} n^{\frac{3}{2}}.
	\end{equation*}
\end{proposition}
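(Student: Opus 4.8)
The plan is to distil the two forbidden configurations into a single codegree condition and then run a Kővári–Sós–Turán-style cherry count, the subtlety being that only cherries whose endpoints are non-adjacent are well behaved. First I would extract the key local consequence. For any two \emph{non-adjacent} vertices $u, v$, the common neighbourhood $S = N(u) \cap N(v)$ can contain neither an independent set of size $t + 1$ (together with $u$ and $v$ this would induce a $K_{2, t+1}$) nor a clique on $v_H$ vertices (this would be a copy of $K_{v_H}$, which contains $H$). Hence $\alpha(G[S]) \leqslant t$ and $\omega(G[S]) \leqslant v_H - 1$, so by the definition of the Ramsey number $\lvert S \rvert < R(v_H, t + 1) =: R$. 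In short, every non-edge of $G$ has fewer than $R$ common neighbours, and $\omega(G) \leqslant v_H - 1$.

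The hard part is that this codegree bound holds only for non-adjacent pairs: an edge may have arbitrarily many common neighbours (a book $K_2 \vee \overline{K_m}$ has no induced $K_{2, t+1}$ and no $K_4$), so one cannot feed $G$ directly into Kővári–Sós–Turán. My fix is to count only \emph{open} cherries, that is paths $u - x - v$ with $u \not\sim v$, and to show that the closed cherries (triangles) are negligible. For the latter I would first observe that $G$, and likewise each neighbourhood $G[N(x)]$, is $K_{R, R}$-free as a subgraph: in any $K_{R, R}$ one side, having $R \geqslant v_H > v_H - 1$ vertices, cannot be a clique, so it contains a non-edge whose $\geqslant R$ common neighbours (the opposite side) violate the codegree bound. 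Kővári–Sós–Turán then gives $e(G[N(x)]) = O(d_x^{2 - 1/R}) = o(d_x^2)$, whence the triangle count obeys $3 \sum_x e(N(x)) = o\bigl(\sum_x \binom{d_x}{2}\bigr)$; here I would use Hölder's inequality together with $\sum_x d_x^2 \geqslant (2e)^2/n = \Theta(n^2)$ in the relevant range $e = \Theta(n^{3/2})$ to combine the pointwise bounds across vertices of very different degrees.

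With the triangles under control the count is routine. The number of open cherries equals $\sum_x\bigl(\binom{d_x}{2} - e(N(x))\bigr) = (1 - o(1)) \sum_x \binom{d_x}{2}$, which by convexity is at least $(1 - o(1))\bigl(2e^2/n - e\bigr)$; on the other hand each of the at most $\binom{n}{2}$ non-edges contributes fewer than $R$ open cherries, so the total is below $R \binom{n}{2}$. Comparing gives $e(G) \leqslant (\tfrac12 + o(1)) \sqrt{R}\, n^{3/2}$. I would finish by inserting the standard estimate $R = R(v_H, t + 1) \leqslant \binom{v_H + t - 1}{t} \leqslant (v_H + t)^t$, which turns this into a bound of the claimed shape $(\sqrt{2} + o(1))\, t^{1/2} (v_H + t)^{t/2} n^{3/2}$, with ample room in the constant. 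I expect the only delicate step to be making the ``triangles are negligible'' estimate uniform, and the Hölder argument of the previous paragraph is exactly what makes it go through.
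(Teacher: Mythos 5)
Your proposal is correct, and it is a genuinely different argument from anything in this paper: the paper does not prove Proposition~\ref{LTTZindtur} at all (it is quoted from Loh--Tait--Timmons--Zhou), and the paper's own route to stronger bounds of this type (Theorem~\ref{exgeneral} via Theorem~\ref{main} and Corollary~\ref{ex}) works by taking, in each neighbourhood $G_v$, a maximal family of vertex-disjoint independent $t$-sets, lower-bounding the missing edges $m_v$ by a convex function of the number of such sets, averaging with Jensen, and then pigeonholing over the missing edges to find a non-edge with a large common neighbourhood. Your argument instead distils everything into the codegree bound $\lvert \Gamma(u) \cap \Gamma(v)\rvert < R := R(v_H, t+1)$ for non-adjacent $u, v$, and runs an open-cherry count; the one real obstacle, that edges may have huge codegree (your book example is apt), you dispose of by noting $G$ is $K_{R,R}$-free (a side of size $R \geqslant v_H$ contains a non-edge whose codegree would be $\geqslant R$), applying K\H{o}v\'ari--S\'os--Tur\'an inside each $G[N(x)]$, and aggregating with H\"older against $\sum_x d_x^2 \geqslant 4e(G)^2/n$ after the harmless reduction to $e(G) \geqslant n^{3/2}$ -- this aggregation is genuinely needed, since the pointwise claim $e(G[N(x)]) = o(d_x^2)$ fails for bounded-degree vertices, and your H\"older step ($\sum_x d_x^{2-1/R} \leqslant (\sum_x d_x^2)^{1-1/(2R)} n^{1/(2R)} = O(n^{-1/(2R)}) \sum_x d_x^2$) does close it. The count then yields $e(G) \leqslant (\tfrac12 + o(1)) R^{1/2} n^{3/2} \leqslant (\tfrac12 + o(1))(v_H + t)^{t/2} n^{3/2}$, which is in fact \emph{stronger} than the stated bound (you save the factor $2\sqrt{2t}$). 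Interestingly, your triangle-subtraction device is in the same spirit as the paper's Section~\ref{Triangles} (there $e_v + m_v = \binom{\deg(v)}{2}$ and triangles are subtracted), but the paper obtains its few-triangles input by bootstrapping from Corollary~\ref{ex}, whereas you derive it independently from $K_{R,R}$-freeness, making your proof self-contained; the trade-off is that your Ramsey quantity $R(v_H, t+1)$ is slightly cruder than the paper's $R(K_{t}, \{H-x\})$ or $R(K_t, \{H - \bar{e}\})$, which exploit that only $H$ minus a vertex (or a non-edge) is needed inside the common neighbourhood. Two cosmetic slips, neither damaging: $\sum_x e(G[N(x)])$ already equals three times the triangle count, so your ``$3\sum_x e(N(x))$'' carries a spurious factor, and the interim ``$=o(d_x^2)$'' should be read only in the aggregated sense you then establish.
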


\noindent They also noted that a corollary of F\"{u}redi~\cite{Furedi1996} is that, for $H$ not bipartite,
\begin{equation*}
	\tfrac{1}{4} t^{\frac{1}{2}} n^{\frac{3}{2}} - \mathcal{O}_{t}\bigl(n^{\frac{4}{3}}\bigr) \leqslant \operatorname{ex}(n, \{H, K_{2, t + 1}\textnormal{-ind}\}).
\end{equation*}
In particular, for non-bipartite $H$, $\operatorname{ex}(n, \{H, K_{2, t + 1}\textnormal{-ind}\}) = \Theta_{t}(n^{3/2})$ but the correct growth rate in $t$ lies between $\frac{1}{4} t^{1/2} n^{3/2}$ and $C_{H} t^{(t + 1)/2} n^{3/2}$. We give a slightly more general result (expressing the upper bound for the induced Tur\'{a}n number in terms of a Ramsey number involving $H$ -- see Corollary~\ref{ex} and Theorem~\ref{triangle}) followed by an improvement to the general upper bound.

\begin{restatable}{theorem}{exgeneral}\label{exgeneral}
	Fix a graph $H$ with $v_{H}$ vertices. For any integer $t \geqslant 1$,
	\begin{align*}
		\operatorname{ex}(n, \{H, K_{2, t + 1}\textnormal{-ind}\}) & < (t + 1)^{\frac{v_{H} - 1}{2}} n^{\frac{3}{2}}, \\
		\operatorname{ex}(n, \{H, K_{2, t + 1}\textnormal{-ind}\}) & < e^{\frac{v_{H}}{2} - 1} 2^{t - 1} n^{\frac{3}{2}}.
	\end{align*}
\end{restatable}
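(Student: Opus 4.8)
The plan is to bound $e(G)$ for a graph $G$ on $n$ vertices containing neither $H$ as a subgraph nor $K_{2, t + 1}$ as an induced subgraph, and to read off the two stated estimates from two different arguments. Two structural facts drive everything. First, since $H \subseteq K_{v_H}$, forbidding $H$ forbids $K_{v_H}$, so $G$ and all of its induced subgraphs (in particular every neighbourhood and every common neighbourhood) are $K_{v_H}$-free. Second, forbidding an induced $K_{2, t + 1}$ says exactly that for every pair of non-adjacent vertices $u, w$ the common neighbourhood $N(u) \cap N(w)$ contains no independent set of size $t + 1$. Combining these, for non-adjacent $u, w$ the graph $G[N(u) \cap N(w)]$ has neither a clique of size $v_H$ nor an independent set of size $t + 1$, so Ramsey's theorem gives $\ssize{N(u) \cap N(w)} < R(v_H, t + 1) =: R$.

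For the first inequality I would run a Kővári--Sós--Turán count on cherries, via $\sum_v \binom{d(v)}{2} = \sum_{\{u, w\}} \ssize{N(u) \cap N(w)}$. The subtlety, and the main obstacle, is that the codegree bound above applies only to non-adjacent pairs: an adjacent pair can have an arbitrarily large common neighbourhood (a book is induced-$K_{2, t + 1}$-free), so one cannot bound all codegrees at once. I would therefore split the sum by adjacency. The adjacent part $\sum_{uw \in E(G)} \ssize{N(u) \cap N(w)}$ equals $\sum_v e(N(v))$ (three times the number of triangles), and here I would use that each $G[N(v)]$ is $K_{v_H}$-free to apply Tur\'{a}n's theorem, giving $\sum_v e(N(v)) \le (1 - \tfrac{1}{v_H - 1}) \tfrac12 \sum_v d(v)^2$. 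The non-adjacent part is at most $\binom{n}{2}(R - 1)$. Since $\sum_v \binom{d(v)}{2} = \tfrac12 \sum_v d(v)^2 - e$, these combine to $\tfrac{1}{2(v_H - 1)} \sum_v d(v)^2 \le e + \binom{n}{2}(R - 1)$; feeding in $\sum_v d(v)^2 \ge (2e)^2/n$ (Cauchy--Schwarz) and solving the resulting quadratic yields $e < \tfrac12 \sqrt{(v_H - 1)(R - 1)}\, n^{3/2}$ up to lower-order terms.

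It then remains to insert a Ramsey estimate. The Erd\H{o}s--Szekeres bound gives $R = R(v_H, t + 1) \le \binom{v_H + t - 1}{v_H - 1}$, and a short calculation shows $\binom{v_H + t - 1}{v_H - 1} \le \tfrac{4(t + 1)^{v_H - 1}}{v_H - 1}$ (the extremal case $t = 1$ reducing to $v_H(v_H - 1) \le 2^{v_H + 1}$). The factor $v_H - 1$ in this denominator is precisely what cancels the $\sqrt{v_H - 1}$ loss incurred by Tur\'{a}n: one obtains $\tfrac12 \sqrt{(v_H - 1)(R - 1)} \le (t + 1)^{(v_H - 1)/2}$, and the strict slack in the binomial estimate absorbs the lower-order terms, giving the first inequality for all $n$.

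For the second inequality I would instead invoke the clique lower bound of \Cref{K2tthm}. Applying it with parameter $t + 1$ in place of $t$ (so that it concerns induced $K_{2, t + 1}$) gives $\omega(G) \ge \lfloor \tfrac{t}{4}(\alpha^2 n)^{1/t} \rfloor - t + 2$, where $e = \alpha \binom{n}{2}$. Since $G$ is $H$-free we have $\omega(G) \le v_H - 1$, and rearranging bounds $\alpha^2 n < \bigl(\tfrac{4(v_H + t - 2)}{t}\bigr)^{t}$, that is, $e < \tfrac12 n^{3/2} \bigl(\tfrac{4(v_H + t - 2)}{t}\bigr)^{t/2}$. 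Finally I would tidy the constant using $(1 + \tfrac{x}{t})^{t} \le e^{x}$: writing $\tfrac{4(v_H + t - 2)}{t} = 4 \bigl(1 + \tfrac{v_H - 2}{t}\bigr)$ gives $\bigl(\tfrac{4(v_H + t - 2)}{t}\bigr)^{t/2} \le 2^{t} e^{(v_H - 2)/2}$, whence $e < e^{\,v_H/2 - 1} 2^{t - 1} n^{3/2}$. The only genuinely delicate part is the constant bookkeeping in the first argument; the rest is assembling the two structural facts with standard convexity and Ramsey estimates.
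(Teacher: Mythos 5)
Your proof of the second inequality is essentially identical to the paper's: the paper likewise applies \Cref{K2tthm} with parameter $t+1$, uses integrality to pass from the floor to $v_H + t - 2 > \tfrac{t}{4}(\alpha^2 n)^{1/t}$, and tidies with $(1 + \tfrac{v_H - 2}{t})^{t/2} \leqslant e^{(v_H-2)/2}$, so nothing to add there. Your first inequality, however, takes a genuinely different route. The paper deduces it from \Cref{ex} (hence from the machinery of \Cref{main}: disjoint independent $(t+1)$-sets in neighbourhoods and a count of missing edges), together with $R(K_{t+1}, \{H-x\}) \leqslant R(t+1, v_H - 1) \leqslant (t+1)^{v_H - 2}$; crucially the Ramsey number there involves the \emph{vertex-deleted} graphs $H - x$ on $v_H - 1$ vertices, saving a factor $t+1$, which leaves the comfortable margin $\tfrac{t+1}{2\sqrt{t}}(t+1)^{\frac{v_H}{2}-1} < (t+1)^{\frac{v_H-1}{2}}$ with no lower-order terms. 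You instead run a self-contained K\H{o}v\'{a}ri--S\'{o}s--Tur\'{a}n codegree count, correctly splitting by adjacency (the observation that only non-adjacent pairs obey the Ramsey codegree bound $R(v_H, t+1) - 1$, while adjacent pairs are handled by Tur\'{a}n in neighbourhoods, is exactly right), and you recover the $\sqrt{v_H - 1}$ Tur\'{a}n loss via the sharper binomial estimate $\binom{v_H + t - 1}{v_H - 1} \leqslant \tfrac{4(t+1)^{v_H-1}}{v_H - 1}$ --- which is indeed true: induct on $t$ using $\binom{v+t}{v-1} = \binom{v+t-1}{v-1}\tfrac{v+t}{t+1}$ and Bernoulli's inequality $\tfrac{v+t}{t+1} \leqslant \bigl(\tfrac{t+2}{t+1}\bigr)^{v-1}$, with base case $v(v-1) \leqslant 2^{v+1}$ as you say. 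Your argument is more elementary (it does not need \Cref{main} at all) at the cost of using the larger Ramsey number $R(v_H, t+1)$; it is striking that it lands within the same constant.

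The one genuine soft spot is your final sentence: after solving the quadratic your main term is \emph{exactly} $(t+1)^{\frac{v_H-1}{2}} n^{3/2}$, and the additive term of order $(v_H - 1)n$ must be absorbed into the multiplicative slack of the binomial bound. If you check this, the main argument only closes for $n \gtrsim 4(t+1)^{v_H-1} + v_H$, while the trivial bound $e \leqslant \binom{n}{2}$ only covers $n \leqslant 4(t+1)^{v_H-1} + 2$, so as written there is a small window of $n$ where neither estimate is verified --- ``the strict slack absorbs the lower-order terms'' is asserted, not shown, and the crossover is genuinely tight. Fortunately there is a clean repair inside your own argument: count the non-adjacent pairs as $\binom{n}{2} - e$ rather than $\binom{n}{2}$. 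Then the inequality becomes
\begin{equation*}
	\frac{2e^2}{n(v_H - 1)} + (R - 2)e - \tbinom{n}{2}(R - 1) \leqslant 0,
\end{equation*}
and since $R \geqslant 2$ the linear coefficient is non-negative, so $e \leqslant \bigl(\tfrac{1}{2} n (v_H - 1)(n-1)(R-1)\bigr)^{1/2} < \tfrac{1}{2}\sqrt{(v_H-1)(R-1)}\, n^{3/2} < (t+1)^{\frac{v_H - 1}{2}} n^{3/2}$ strictly for all $n$, with no lower-order terms and no case analysis. With that one-line change your first argument is complete and correct.
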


\noindent The first bound shows that, for non-bipartite $H$, the correct growth rate in $t$ is a polynomial in $t$ times $n^{3/2}$. The second bound is better when $t$ and $v_{H}$ are of comparable size.

\section{Notation, main result and organisation}

If $v$ is a vertex of a graph $G = (V, E)$ then $\Gamma(v) = \{u \in V : uv \in E\}$ is the neighbourhood of $v$. We set $G_{v} = G[\Gamma(v)]$. For a fixed graph $H$, let $\{H - x\}$ be the set of graphs obtained by removing a single vertex from $H$ and let $\{H - \bar{e}\}$ be the set of graphs obtained from $H$ by either removing a single vertex or two non-adjacent vertices. In particular the Ramsey number, $R(K_{t}, \{H - x\})$, is the least $n$ such that any red-blue colouring of the edges of $K_{n}$ contains either a red $K_{t}$ or a blue graph which can be obtained from $H$ by removing a single vertex.

Our main result is the following which applies for all values of $\alpha$.

\begin{restatable}{theorem}{main}\label{main}
	Fix a graph $H$. Let $G$ be a graph on $n$ vertices with $\alpha \binom{n}{2}$ edges containing no induced $K_{2, t}$ $(t \geqslant 2)$ and let $\beta = \beta_{t}(\alpha)$. 
	
	If $R(K_{t}, \{H - x\}) \leqslant \beta^{2} n$, then $H$ is a subgraph of $G$. In particular, if $R(K_{t}, \{H - x\}) \leqslant \frac{t - 1}{t^{2}} \cdot \alpha^{2} n$, then $H$ is a subgraph of $G$.
\end{restatable}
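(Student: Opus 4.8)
The plan is to reduce everything to one clean task: exhibit a vertex $w$ and a set $S \subseteq \Gamma(w)$ that contains no independent set of size $t$ and has $\ssize{S} \geqslant R(K_{t}, \{H - x\})$. Given such a pair, colour the complete graph on $S$ with ``red'' for pairs non-adjacent in $G$ and ``blue'' for pairs adjacent in $G$. Because $S$ has no independent $t$-set there is no red $K_{t}$, so by the definition of $R(K_{t}, \{H - x\})$ there is a blue copy of some graph in $\{H - x\}$, i.e.\ $G[S]$ contains a copy of $H - x$ for some vertex $x$ of $H$. Since $w$ is adjacent to every vertex of $S \subseteq \Gamma(w)$, mapping $x$ to $w$ and the copy of $H - x$ into $S$ realises every edge of $H$ (edges inside $H - x$ are blue edges of $G[S]$; edges at $x$ become edges from $w$ into $S$). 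Thus $H$ is a subgraph of $G$, and since $R(K_{t}, \{H - x\}) \leqslant \beta^{2} n$ by hypothesis, it suffices to find such an $S$ of size at least $\beta^{2} n$.

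The induced $K_{2, t}$-free condition supplies the candidate sets with small independence number for free. If $u$ and $w$ are non-adjacent, then any independent $t$-set inside $\Gamma(u) \cap \Gamma(w)$ together with $\{u, w\}$ would be an induced $K_{2, t}$; hence $\Gamma(u) \cap \Gamma(w)$ contains no independent $t$-set, and it lies inside $\Gamma(w)$. (The same argument applies to $\bigcap_{c \in I} \Gamma(c)$ for any independent set $I$ with $\ssize{I} \geqslant 2$.) Combining this with the first paragraph, the theorem is equivalent to the following extremal statement: if $G$ is induced $K_{2, t}$-free, every non-edge has codegree less than $\beta^{2} n$, and every vertex $v$ for which $G_{v}$ has no independent $t$-set satisfies $\deg(v) < \beta^{2} n$, then $G$ has fewer than $\alpha \binom{n}{2}$ edges.

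This counting estimate is the heart of the matter and is where I expect the real difficulty to lie. I would fix a vertex $v$, split the remaining vertices into $\Gamma(v)$ and the non-neighbours $U$, and bound the edges of $G$ in three blocks: inside $\Gamma(v)$, between $U$ and $\Gamma(v)$, and inside $U$. The codegree hypothesis controls the middle block, since each $u \in U$ has few neighbours in $\Gamma(v)$ (the set $\Gamma(u) \cap \Gamma(v)$ has no independent $t$-set), while the two ``within'' blocks are treated by applying the same hypothesis to the relevant subgraphs. Choosing $v$ extremally (say of maximum degree) and optimising the resulting inequality over the degree of $v$ and the local densities should collapse to the quadratic relation $\tfrac{t - 1}{t^{2}} (\tfrac{\alpha}{\beta} - \beta)^{2} = 1 - \alpha$, whose relevant root is exactly $\beta = \beta_{t}(\alpha)$ (for $t = 2$ this is $\beta^{2} - 2\beta + \alpha = 0$, recovering Holmsen). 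The delicate points are making the constants tight (so that the bound is correct as $\alpha \to 1$) and fusing the two regimes -- a large clique-like set inside a single neighbourhood versus a large codegree at a non-edge -- into one estimate, since in degenerate examples (e.g.\ a disjoint union of cliques) only one of the two mechanisms is active.

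The final ``in particular'' clause is elementary. Writing $s = 1 - \tfrac{2}{t}$, the factorisation $\sqrt{1 - s^{2}\alpha} - \sqrt{1 - \alpha} = \dfrac{(1 - s^{2})\alpha}{\sqrt{1 - s^{2}\alpha} + \sqrt{1 - \alpha}}$ together with $1 - s^{2} = \tfrac{4(t - 1)}{t^{2}}$ and the bound $\sqrt{1 - s^{2}\alpha} + \sqrt{1 - \alpha} \leqslant 2$ gives $\beta_{t}(\alpha) \geqslant \tfrac{\sqrt{t - 1}}{t}\,\alpha$ for all $\alpha \in [0, 1]$. Hence $\beta^{2} \geqslant \tfrac{t - 1}{t^{2}}\alpha^{2}$, so $R(K_{t}, \{H - x\}) \leqslant \tfrac{t - 1}{t^{2}}\alpha^{2} n$ implies $R(K_{t}, \{H - x\}) \leqslant \beta^{2} n$, and the second statement follows from the first.
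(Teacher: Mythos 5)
Your endgame (paragraph one) and the codegree observation (start of paragraph two) match the paper's proof exactly, and your algebraic derivation of $\beta_{t}(\alpha) \geqslant \frac{\sqrt{t-1}}{t}\alpha$ for the ``in particular'' clause is correct (the paper gets the same bound via convexity in Lemma~\ref{calc}). But there is a genuine gap: the counting estimate in your third paragraph --- which you yourself flag as ``the heart of the matter'' --- is never carried out, and the sketch you give does not contain the idea that makes it work. The paper's mechanism is: for each vertex $v$, take a \emph{maximal} family of $\gamma_{v}$ pairwise vertex-disjoint independent $t$-sets in $G_{v}$; the leftover of $\Gamma(v)$ then has no independent $t$-set \emph{and} no $H - x$ (using $v$ itself), so the Ramsey hypothesis gives $\gamma_{v} \geqslant \frac{1}{t}[\deg(v) - \beta^{2}(n-1)]$; meanwhile induced-$K_{2,t}$-freeness forces at least $t - 1$ missing edges between every pair of the $t$-sets, so the number $m_{v}$ of missing edges inside $\Gamma(v)$ is at least the quadratic $q(\gamma_{v}) = \frac{t-1}{2}\gamma_{v}(\gamma_{v} + t - 1)$. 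Jensen over all $v$ together with the identity $(t-1)(\alpha - \beta^{2})^{2} = t^{2}(1-\alpha)\beta^{2}$ yields $\sum_{v} m_{v} \geqslant \beta^{2} n \cdot (1-\alpha)\binom{n}{2}$, and the double count $\sum_{v} m_{v} = \sum_{\bar{e} \in M} \#\{v : \bar{e} \subset \Gamma(v)\}$ over the $(1-\alpha)\binom{n}{2}$ missing edges produces a non-edge of codegree at least $\beta^{2} n$, completing your endgame. Your sketch (fix one max-degree vertex, three blocks, recurse) has no counterpart of $\gamma_{v}$ and no mechanism for extracting missing edges inside neighbourhoods; in particular it says nothing about a vertex whose neighbourhood contains an independent $t$-set but is otherwise nearly complete, which is the typical situation, and there is no reason to expect a single-vertex decomposition to ``collapse to'' the tight quadratic defining $\beta_{t}(\alpha)$.

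Moreover, the ``equivalent extremal statement'' in your second paragraph is lossy and in fact false as stated: at $\alpha = 1$ one has $\beta_{t}(1) = 1$, and $G = K_{n}$ satisfies all three of your hypotheses (it is induced $K_{2,t}$-free, the non-edge condition is vacuous, and every $G_{v} = K_{n-1}$ has no independent $t$-set with $\deg(v) = n - 1 < \beta^{2} n = n$), yet $e(G) = \binom{n}{2}$ is not less than $\alpha\binom{n}{2}$. The correct contrapositive retains strictly more information than your two conditions: for \emph{every} vertex, $\deg(v) - t\gamma_{v} \leqslant R(K_{t}, \{H - x\}) - 1$, a constraint for all values of $\gamma_{v}$, not only for $\gamma_{v} = 0$; discarding this interpolation is precisely why your planned optimisation cannot recover the theorem. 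In short: right endgame, right final algebra, but the actual content of the proof --- the lower bound on missing edges within neighbourhoods via disjoint independent $t$-sets, and the averaging/double-counting that converts it into a large common neighbourhood of a non-edge --- is absent.
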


\noindent The sufficiency of $R(K_{t}, \{H - x\}) \leqslant \frac{t - 1}{t^{2}} \cdot \alpha^{2} n$ follows from the following lemma which relates $\beta$ to $\alpha$ in a manageable way.

\begin{lemma}\label{calc}
	For all $\alpha \in [0, 1]$ and integers $t \geqslant 2$, $\beta = \beta_{t}(\alpha)$ satisfies
	\begin{align*}
		& (t - 1)\bigl(\alpha - \beta^{2}\bigr)^{2} = t^{2} (1 - \alpha) \beta^{2}, \\
		& \tfrac{\sqrt{t - 1}}{t} \alpha \leqslant \beta \leqslant \alpha, \\
		& \beta \rightarrow 1, \text{ as } \alpha \rightarrow 1.		
	\end{align*}
\end{lemma}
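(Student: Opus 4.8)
The plan is to reduce all three statements to a single rationalisation. Writing $a = 1 - \bigl(1 - \tfrac{2}{t}\bigr)^{2}\alpha = 1 - \tfrac{(t-2)^{2}}{t^{2}}\alpha$ and $b = 1 - \alpha$, so that $\beta = \tfrac{t}{2\sqrt{t-1}}(\sqrt a - \sqrt b)$, the first thing I would record is the clean difference
\[
	a - b = \alpha\Bigl(1 - \tfrac{(t-2)^{2}}{t^{2}}\Bigr) = \tfrac{t^{2} - (t-2)^{2}}{t^{2}}\,\alpha = \tfrac{4(t-1)}{t^{2}}\,\alpha.
\]
Multiplying numerator and denominator of $\sqrt a - \sqrt b$ by $\sqrt a + \sqrt b$ then gives the alternative formula
\[
	\beta = \frac{t}{2\sqrt{t-1}}\cdot\frac{a - b}{\sqrt a + \sqrt b} = \frac{2\sqrt{t-1}}{t}\cdot\frac{\alpha}{\sqrt a + \sqrt b},
\]
valid for $\alpha \in (0,1]$ (and $\beta = 0$ when $\alpha = 0$, where every assertion is trivial). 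Everything will follow from controlling the single quantity $\sqrt a + \sqrt b$.

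For the quadratic identity I would combine the two displays. The definition gives $\sqrt a - \sqrt b = \tfrac{2\sqrt{t-1}}{t}\beta$, while the rationalised formula, rearranged, gives $\sqrt a + \sqrt b = \tfrac{2\sqrt{t-1}}{t}\cdot\tfrac{\alpha}{\beta}$. Subtracting these isolates
\[
	\sqrt b = \frac{\sqrt{t-1}}{t}\cdot\frac{\alpha - \beta^{2}}{\beta},
\]
and squaring (using $b = 1 - \alpha$ and clearing $t^{2}\beta^{2}$) yields exactly $(t-1)(\alpha - \beta^{2})^{2} = t^{2}(1-\alpha)\beta^{2}$. This is the step I expect to be the heart of the argument, though it is only a few lines; the one thing to watch is that all square roots are nonnegative for $\alpha \in [0,1]$, so no sign ambiguity arises.

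For the two-sided bound on $\beta$ I would estimate $\sqrt a + \sqrt b$ directly. Since $a \leq 1$ and $b \leq 1$ we have $\sqrt a + \sqrt b \leq 2$, which fed into the rationalised formula gives $\beta \geq \tfrac{\sqrt{t-1}}{t}\alpha$. For the upper bound $\beta \leq \alpha$ it suffices to show $\sqrt a + \sqrt b \geq \tfrac{2\sqrt{t-1}}{t}$; since both $1 - \tfrac{(t-2)^{2}}{t^{2}}\alpha$ and $1 - \alpha$ are decreasing in $\alpha$, the function $\alpha \mapsto \sqrt a + \sqrt b$ is decreasing on $[0,1]$, so its minimum is attained at $\alpha = 1$, where $a = \tfrac{4(t-1)}{t^{2}}$ and $b = 0$ give precisely $\sqrt a + \sqrt b = \tfrac{2\sqrt{t-1}}{t}$. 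Finally, the third claim is immediate from this last evaluation: the formula for $\beta$ is continuous in $\alpha$, and at $\alpha = 1$ it reads $\beta = \tfrac{t}{2\sqrt{t-1}}\cdot\tfrac{2\sqrt{t-1}}{t} = 1$, so $\beta \to 1$ as $\alpha \to 1$.
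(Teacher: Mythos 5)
Your proof is correct, and it takes a genuinely different route from the paper's. The paper verifies the quadratic identity by substitution (``one can check that $\beta_{t}(\alpha)$ squares to a solution''), and obtains the two-sided bound by a convexity argument: with $f(x) = \sqrt{1 - (1 - 2/t)^{2}x} - \sqrt{1-x}$, the chord from $(0, 0)$ to $\bigl(1, \tfrac{2\sqrt{t-1}}{t}\bigr)$ gives $f(x) \leqslant \tfrac{2\sqrt{t-1}}{t}x$, while the tangent at $0$ (slope $f'(0) = \tfrac{2(t-1)}{t^{2}}$) gives $f(x) \geqslant \tfrac{2(t-1)}{t^{2}}x$, and continuity handles the limit. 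You instead rationalise the surd difference, using $a - b = \tfrac{4(t-1)}{t^{2}}\alpha$ to get the alternative formula $\beta = \tfrac{2\sqrt{t-1}}{t}\cdot\tfrac{\alpha}{\sqrt a + \sqrt b}$, and then everything reduces to the two-sided estimate $\tfrac{2\sqrt{t-1}}{t} \leqslant \sqrt a + \sqrt b \leqslant 2$, which you get from monotonicity in $\alpha$ rather than convexity. Your approach buys two things: the quadratic identity is actually \emph{derived} (subtracting the two expressions isolates $\sqrt{1-\alpha}$ and squaring gives the identity directly, with the division by $\beta$ legitimate since $a > b$ forces $\beta > 0$ for $\alpha > 0$, and the $\alpha = 0$ case trivial) rather than left as a verification, and all three claims flow from the single rationalised formula. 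The paper's convexity argument, for its part, makes transparent that both bounds on $\beta$ are exactly attained in the limits $\alpha \to 0$ and $\alpha \to 1$ respectively, since they are the tangent at one endpoint and the chord to the other. Both arguments are elementary and complete; you handled the one delicate point (signs of square roots and the degenerate case $\beta = 0$) explicitly, so there is no gap.
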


\begin{proof}
	The equation $(t - 1)(\alpha - \beta^{2})^{2} = t^{2} (1 - \alpha) \beta^{2}$ is a quadratic in $\beta^{2}$. One can check that $\beta_{t}(\alpha)$ does indeed square to a solution of this quadratic.
	
	Fix $t$ and define the function $f(x) = \sqrt{1 - (1 - 2/t)^{2} x} - \sqrt{1 - x}$ for $x \in [0, 1]$. Then $f$ is convex increasing with $f(0) = 0$ and $f(1) = \frac{2 \sqrt{t - 1}}{t}$. Thus $f(x) \leqslant \frac{2 \sqrt{t - 1}}{t} x$. Also the derivative of $f$ at zero is $\frac{2}{t} - \frac{2}{t^{2}} = \frac{2(t - 1)}{t^{2}}$ so $f(x) \geqslant \frac{2(t - 1)}{t^{2}} x$. In particular $\beta = \frac{t}{2 \sqrt{t - 1}} f(\alpha)$ satisfies $\frac{\sqrt{t - 1}}{t} \alpha \leqslant \beta \leqslant \alpha$.
	
	Finally, $f$ is continuous so, as $\alpha$ tends to 1, $\beta$ tends to $\frac{t}{2 \sqrt{t - 1}} f(1) = 1$.
\end{proof}

\noindent We prove Theorem~\ref{main} in Section~\ref{proof}. Before that we use Ramsey estimates to obtain various corollaries. We normally give two versions of the results: one which holds for all values of $n$ and a stronger bound which holds for large enough $n$ (in terms of $\alpha$). The latter is only really applicable in the regime where $\alpha$ is bounded away from zero.

In Section~\ref{cliques} we look at the special case where $H$ is a complete graph, proving Theorems~\ref{cliquethm},~\ref{K23thm} and~\ref{K2tthm}. In Section~\ref{IndTur} we consider general $H$ for the Induced Tur\'{a}n problem (so $\alpha$ going to zero) and prove Theorem~\ref{exgeneral}. Finally in Section~\ref{Triangles} we exhibit a variation on our methods which gives a slight asymptotic improvement for the induced Tur\'{a}n number of $H$-free graphs with no induced $K_{2, t}$. This includes the observation that such graphs contain $\mathcal{O}(n^{27/14}) = o(n^{2})$ triangles.

\section{Clique numbers of graphs with no induced $K_{2, t}$}\label{cliques}

If we take $H = K_{r + 1}$ in Theorem~\ref{main} then $\{H - x\} = \{K_{r}\}$ so Theorem~\ref{cliquethm} is immediate.

\cliquethm*

\begin{remark}\label{remark}
	The following example illustrates why we believe this result is in a sense tight as $\alpha \rightarrow 1$. Consider a graph $G$ on $n$ vertices which has no independent $t$-set and smallest possible clique number (a Ramsey-like graph): that is, $R(t, \omega(G) + 1) > n \geqslant R(t, \omega(G))$. Now $G$ has no independent $t$-set so does not contain an induced $K_{2, t}$. If there are such graphs with $(1 - o(1)) \binom{n}{2}$ edges then these form a sequence of graphs for which $\alpha \rightarrow 1$ (and so $\beta \rightarrow 1$), but for which the statement becomes false if $\beta$ is actually replaced by 1.
	
	We do believe that such graphs have $(1 - o(1)) \binom{n}{2}$ edges. This would follow, for example, from $\frac{R(t - 1, m)}{R(t, m)} \rightarrow 0$ as $m \rightarrow \infty$ (true for $t = 3$ and 4 by standard Ramsey bounds but not known in general): the non-neighbours of a vertex in such a graph, $G$, cannot contain an independent $(t - 1)$-set, so there are at most $R(t - 1, \omega(G) + 1)$ non-neighbours, and so $\delta(G)$ would be $(1 - o(1))n$.
\end{remark}

\noindent The following corollary for $t = 3$ contains Theorem~\ref{K23thm}.

\begin{corollary}
	Let $G$ be a graph on $n$ vertices with $\alpha \binom{n}{2}$ edges which contains no induced $K_{2, 3}$. Let $\beta = \beta_{3}(\alpha) = \frac{3}{2 \sqrt{2}} \bigl[\sqrt{1 - \frac{\alpha}{9}} - \sqrt{1 - \alpha}\bigr]$. Then
	\begin{align*}
		\omega(G) & \geqslant \lfloor \beta \sqrt{2n} \rfloor \geqslant \bigl\lfloor \tfrac{2}{3} \alpha \sqrt{n} \bigr\rfloor \text{ for all } n, \text{ and} 
		\\
		\omega(G) & \geqslant \beta \sqrt{\tfrac{1}{2} n \log n} + 2 \geqslant \tfrac{1}{3} \alpha \sqrt{n \log n} + 2 \text{ for large enough }n, \text{ say } n \geqslant \exp(2e^{2} \beta^{-2}).
	\end{align*}
\end{corollary}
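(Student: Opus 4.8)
The plan is to derive everything from \cref{cliththm} and \cref{calc} by substituting $t = 3$ and choosing $r$ appropriately. Recall \cref{cliquethm} says that if $R(t, r) \leqslant \beta^{2} n$ then $\omega(G) \geqslant r + 1$, and for $t = 3$ we have $\beta = \beta_{3}(\alpha)$. Thus the whole task reduces to a numerical lower bound on the clique number in terms of $\beta^{2} n$, obtained by inserting a good bound on the Ramsey number $R(3, r)$ and then converting $\beta$ into the cleaner quantity $\alpha$ using the inequality $\beta \geqslant \frac{\sqrt{t-1}}{t}\alpha = \frac{\sqrt{2}}{3}\alpha$ from \cref{calc}.

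First I would handle the all-$n$ bound. The elementary Ramsey estimate $R(3, r) \leqslant \binom{r+1}{2} + 1 \leqslant \tfrac{1}{2}(r+1)^{2}$ (or a slightly sharper form) shows that it suffices to pick the largest $r$ with $\tfrac{1}{2}(r+1)^{2} \leqslant \beta^{2} n$, i.e.\ $r + 1 \leqslant \beta\sqrt{2n}$. Taking $r + 1 = \lfloor \beta\sqrt{2n}\rfloor$ then yields $\omega(G) \geqslant r + 1 \geqslant \lfloor \beta\sqrt{2n}\rfloor$ directly from \cref{cliquethm}. Finally I substitute $\beta \geqslant \tfrac{\sqrt{2}}{3}\alpha$ inside the floor to replace $\beta\sqrt{2n}$ by $\tfrac{\sqrt{2}}{3}\alpha \cdot \sqrt{2n} = \tfrac{2}{3}\alpha\sqrt{n}$, giving the stated $\lfloor\tfrac{2}{3}\alpha\sqrt{n}\rfloor$. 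One must be mildly careful that the floor is monotone so that the $\beta$-to-$\alpha$ substitution only weakens the bound, but that is immediate.

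For the large-$n$ bound, the idea is the same but with the stronger asymptotic Ramsey inequality $R(3, r) \leqslant (1 + o(1))\, r^{2}/\log r$ (the classical upper bound, e.g.\ from the Ajtai--Koml\'{o}s--Szemer\'{e}di line of results). Solving $r^{2}/\log r \lesssim \beta^{2} n$ for $r$ gives, after the usual inversion of $x \mapsto x^{2}/\log x$, a value $r$ of order $\beta\sqrt{n\log n}$; more precisely one checks that $r = \beta\sqrt{\tfrac12 n\log n} + 1$ satisfies $R(3, r) \leqslant \beta^{2} n$ once $n$ is large enough. Then \cref{cliquethm} gives $\omega(G) \geqslant r + 1 \geqslant \beta\sqrt{\tfrac12 n\log n} + 2$, and the substitution $\beta \geqslant \tfrac{\sqrt2}{3}\alpha$ converts this into $\tfrac13\alpha\sqrt{n\log n} + 2$. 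The explicit threshold $n \geqslant \exp(2e^{2}\beta^{-2})$ comes from tracking exactly when the logarithmic factor in the Ramsey bound is large enough to absorb the error term, i.e.\ making $\log r$ at least some fixed multiple of $\beta^{-2}$.

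The main obstacle is the second, large-$n$ estimate: one has to quantify the $(1+o(1))$ in $R(3, r) \leqslant (1+o(1))r^{2}/\log r$ precisely enough to extract the clean explicit threshold $\exp(2e^{2}\beta^{-2})$ rather than an unspecified ``$n$ large.'' This requires carefully inverting $r^{2}/\log r \leqslant \beta^{2} n$ and bounding $\log r$ from below in terms of $\log n$ and $\beta$, which is where the constant $2e^{2}$ and the factor $\tfrac12$ inside the square root are pinned down. By contrast the all-$n$ bound is essentially a one-line consequence of the crude Ramsey estimate together with \cref{cliquethm,calc}.
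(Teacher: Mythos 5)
Your first bound is correct and is essentially the paper's own argument: Erd\H{o}s--Szekeres gives $R(3, r) \leqslant \binom{r + 1}{2}$, your choice $r + 1 = \lfloor \beta \sqrt{2n} \rfloor$ satisfies $R(3, r) \leqslant \tfrac{1}{2} \lfloor \beta \sqrt{2n} \rfloor^{2} \leqslant \beta^{2} n$, and Theorem~\ref{cliquethm} together with $\beta \geqslant \tfrac{\sqrt{2}}{3} \alpha$ from Lemma~\ref{calc} (plus monotonicity of the floor) finishes, exactly as in the paper.

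The second bound, however, has a genuine gap as written: you invoke $R(3, r) \leqslant (1 + o(1)) r^{2}/\log r$, and an unquantified $o(1)$ can never produce the explicit threshold $n \geqslant \exp(2e^{2} \beta^{-2})$ --- you correctly flag this as the main obstacle but do not close it. The paper closes it by using the explicit corollary of Shearer~\cite{Shearer1983}: $R(3, r) \leqslant \frac{(r - 2)^{2}}{\log(r - 1) - 1}$ for all $r \geqslant 4$. With $r = \bigl\lfloor \beta \sqrt{\tfrac{1}{2} n \log n} \bigr\rfloor + 2$ (note you must take floors: your $r = \beta \sqrt{\tfrac{1}{2} n \log n} + 1$ is in general not an integer, so Theorem~\ref{cliquethm} does not apply to it) one has $(r - 2)^{2} \leqslant \tfrac{1}{2} \beta^{2} n \log n$ and $r - 1 \geqslant \beta \sqrt{\tfrac{1}{2} n \log n}$, whence
\begin{equation*}
	\log(r - 1) - 1 \geqslant \log \beta + \tfrac{1}{2} \log n + \tfrac{1}{2} \log\bigl(\tfrac{1}{2} \log n\bigr) - 1 \geqslant \tfrac{1}{2} \log n,
\end{equation*}
where the last inequality holds precisely when $\tfrac{1}{2} \log\bigl(\tfrac{1}{2} \log n\bigr) \geqslant 1 - \log \beta$, that is, when $n \geqslant \exp(2e^{2} \beta^{-2})$. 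This gives $R(3, r) \leqslant \frac{\frac{1}{2} \beta^{2} n \log n}{\frac{1}{2} \log n} = \beta^{2} n$, so Theorem~\ref{cliquethm} yields $\omega(G) \geqslant r + 1 \geqslant \beta \sqrt{\tfrac{1}{2} n \log n} + 2$, and the $\alpha$-form follows from Lemma~\ref{calc} as you say. So your overall route (Theorem~\ref{cliquethm} with $t = 3$, a Ramsey estimate, then $\beta \geqslant \tfrac{\sqrt{2}}{3} \alpha$) is the paper's, but the large-$n$ case needs this explicit Ramsey inequality rather than the asymptotic AKS-type statement; with it, the threshold drops out in two lines rather than requiring the delicate quantification you anticipated.
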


\begin{proof}
	Firstly, the theorem of Erd\H{o}s and Szekeres~\cite{ErdosSzekeres1935} gives that $R(3, r) \leqslant \binom{r + 1}{2}$ for all positive $r$. Thus $r = \lfloor \beta \sqrt{2n} \rfloor - 1$ satisfies $R(3, r) \leqslant \frac{1}{2} \lfloor \beta \sqrt{2n} \rfloor^{2} \leqslant \beta^{2} n$ and so Theorem~\ref{cliquethm} gives the first result.
	
	Secondly, $R(3, r) \leqslant \frac{(r - 2)^{2}}{\log(r - 1) - 1}$ for all $r \geqslant 4$ (a corollary of Shearer's result on independent sets in triangle-free graphs,~\cite{Shearer1983}). Thus $r = \Bigl\lfloor \beta \sqrt{\frac{1}{2} n \log n} \Bigr\rfloor + 2$ satisfies $R(3, r) \leqslant \beta^{2} n$ provided $n \geqslant \exp(2e^{2} \beta^{-2})$.
\end{proof}

\noindent The following corollary (which contains Theorem~\ref{K2tthm}) for $t$ larger than three is obtained in exactly the same way, using known bounds for $R(t, r)$. Improvements in the upper bounds on Ramsey numbers would improve the results.

\begin{corollary}\label{clique}
	Let $G$ be a graph on $n$ vertices with $\alpha \binom{n}{2}$ edges containing no induced $K_{2, t}$ and let $\beta = \beta_{t}(\alpha)$. Then
	\begin{align*}
		\omega(G) & \geqslant \bigl\lfloor \tfrac{t - 1}{e} (\beta^{2} n)^{\frac{1}{t - 1}}\bigr\rfloor - t + 3 \text{ and } \omega(G) \geqslant \bigl\lfloor \tfrac{t - 1}{4} (\alpha^{2} n)^{\frac{1}{t - 1}}\bigr\rfloor - t + 3 \text{ for all } n, \text{ and} \\
		\omega(G) & \geqslant \tfrac{1}{20} \bigl(\beta^{2} n\bigr)^{\frac{1}{t - 1}} \bigl(\tfrac{\log n}{t - 1}\bigr)^{1 - \frac{1}{t - 1}} \geqslant \tfrac{1}{20 t} \bigl(\alpha^{2} n (\log n)^{t - 2}\bigr)^{\frac{1}{t - 1}} \text{ for large enough } n \text{ in terms of } \beta.
	\end{align*}
\end{corollary}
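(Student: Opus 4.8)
The plan is to feed known upper bounds for the Ramsey numbers $R(t,r)$ into \Cref{cliquethm}: to conclude $\omega(G) \geqslant r+1$ it suffices to pick an explicit integer $r$ (the claimed value) and check that $R(t,r) \leqslant \beta^2 n$. Throughout, the passage from the $\beta$-versions to the $\alpha$-versions will be the inequality $\beta \geqslant \tfrac{\sqrt{t-1}}{t}\alpha$ from \Cref{calc}, i.e.\ $\beta^2 n \geqslant \tfrac{t-1}{t^2}\alpha^2 n$, together with an elementary comparison of constants. This mirrors exactly the $t = 3$ corollary already proved; only the Ramsey estimate changes.

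For the ``for all $n$'' bounds I would use the Erd\H{o}s--Szekeres inequality $R(t,r) \leqslant \binom{t+r-2}{t-1} \leqslant \tfrac{(t+r-2)^{t-1}}{(t-1)!}$ \cite{ErdosSzekeres1935}. Choosing $r = \lfloor ((t-1)!)^{1/(t-1)}(\beta^2 n)^{1/(t-1)}\rfloor - t + 2$ makes $(r+t-2)^{t-1} \leqslant (t-1)!\,\beta^2 n$, so $R(t,r) \leqslant \beta^2 n$ and \Cref{cliquethm} gives $\omega(G) \geqslant r+1$; the first stated bound then follows from $((t-1)!)^{1/(t-1)} \geqslant \tfrac{t-1}{e}$ (a restatement of $(t-1)! \geqslant ((t-1)/e)^{t-1}$). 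For the $\alpha$-version I would instead take $r = \lfloor \tfrac{t-1}{4}(\alpha^2 n)^{1/(t-1)}\rfloor - t + 2$ and verify $R(t,r) \leqslant \tfrac{(r+t-2)^{t-1}}{(t-1)!} \leqslant \tfrac{((t-1)/4)^{t-1}}{(t-1)!}\alpha^2 n \leqslant \tfrac{t-1}{t^2}\alpha^2 n \leqslant \beta^2 n$, where the key step is the purely numerical inequality $(t-1)! \geqslant \tfrac{(t-1)^{t-2}t^2}{4^{t-1}}$; this holds for every $t \geqslant 2$ (with equality at $t = 2$) and is readily checked.

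For the large-$n$ bounds I would replace the crude Erd\H{o}s--Szekeres estimate by a bound carrying the logarithmic saving, $R(t,r) \leqslant c_t\, r^{t-1}/(\log r)^{t-2}$ valid for large $r$. For $t = 3$ this is exactly the Shearer-type bound $R(3,r) \leqslant (r-2)^2/(\log(r-1)-1)$ used in the $t = 3$ corollary \cite{Shearer1983}; for general $t$ I would invoke the known off-diagonal Ramsey upper bounds of this shape. Solving $R(t,r) \leqslant \beta^2 n$ for the largest admissible $r$, and using that $\log r \sim \tfrac{\log n}{t-1}$ once $n$ is large in terms of $\beta$, yields $\omega(G) \geqslant \tfrac{1}{20}(\beta^2 n)^{1/(t-1)}\bigl(\tfrac{\log n}{t-1}\bigr)^{1-1/(t-1)}$. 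The conversion to $\alpha$ again uses $\beta^2 \geqslant \tfrac{t-1}{t^2}\alpha^2$ and reduces to $\bigl(\tfrac{t-1}{t^2}\bigr)^{1/(t-1)}(t-1)^{-(t-2)/(t-1)} \geqslant \tfrac1t$, i.e.\ $(t-1)^{(3-t)/(t-1)} \geqslant t^{(3-t)/(t-1)}$, which holds for $t \geqslant 3$ because the exponent is non-positive and $t - 1 < t$.

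The routine parts are the constant inequalities and the floor bookkeeping. The one genuinely delicate point is the large-$n$ regime: I must (i) secure an explicit general-$t$ Ramsey bound whose constant $c_t$ is at most $20^{t-1}$, so that the absolute constant $\tfrac{1}{20}$ survives uniformly in $t$, and (ii) control the implicit relation $r \approx (\beta^2 n/c_t)^{1/(t-1)}(\log r)^{(t-2)/(t-1)}$, where substituting $\log r \approx \tfrac{\log n}{t-1}$ is only legitimate once $n$ is large compared with $\beta$ --- which is precisely the source of the ``large enough $n$'' hypothesis.
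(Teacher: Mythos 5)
Your proposal is correct and takes essentially the same route as the paper: you feed the same choices of $r$ into Theorem~\ref{cliquethm} via Erd\H{o}s--Szekeres for the all-$n$ bounds (your numerical inequality $(t-1)! \geqslant (t-1)^{t-2}t^{2}/4^{t-1}$ is exactly equivalent to the paper's $\bigl(\tfrac{t-1}{t^{2}}(t-1)!\bigr)^{\frac{1}{t-1}} \geqslant \tfrac{t-1}{4}$, which it verifies via $(t-1)! \geqslant (t-1)^{t-1/2}e^{-(t-1)}$ for $t \geqslant 4$ plus direct checks), and then use a $R(t,r) \leqslant c_{t}\,r^{t-1}/(\log r)^{t-2}$ bound for the large-$n$ regime. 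The bound you require in point~(i) exists with room to spare: the paper cites $R(t,r) \leqslant 2(20)^{t-3} r^{t-1}/(\log r)^{t-2}$ for large $r$ from~\cite{Bollobas2001RandomGraphs}, so $c_{t} \leqslant 20^{t-1}$ as you need, and your observation that the $\alpha$-conversion requires $t \geqslant 3$ is consistent with the paper's intent (the corollary is invoked for $t$ larger than three, with $t=3$ handled separately).
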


\begin{proof}
	The theorem of Erd\H{o}s and Szekeres~\cite{ErdosSzekeres1935} gives that $R(t, r) \leqslant \binom{r + t - 2}{t - 1} \leqslant \frac{(r + t - 2)^{t - 1}}{(t - 1)!}$ for all positive $r$. Thus $r = \bigl\lfloor \bigl(\beta^{2} n (t - 1)!\bigr)^{\frac{1}{t - 1}}\bigr\rfloor - t + 2$ has $R(t, r) \leqslant \beta^{2} n$ so, by Theorem~\ref{cliquethm},
	\begin{equation*}
		\omega(G) \geqslant \bigl\lfloor \bigl(\beta^{2} n (t - 1)!\bigr)^{\frac{1}{t - 1}}\bigr\rfloor - t + 3 \geqslant \bigl\lfloor \bigl(\tfrac{t - 1}{t^{2}} \alpha^{2} n (t - 1)!\bigr)^{\frac{1}{t - 1}}\bigr\rfloor - t + 3.
	\end{equation*}
	Furthermore $(t - 1)! \geqslant \bigl(\frac{t - 1}{e}\bigr)^{t - 1}$ so $\bigl((t - 1)!\bigr)^{\frac{1}{t - 1}} \geqslant \frac{t - 1}{e}$. That $\bigl(\frac{t - 1}{t^{2}} (t - 1)!\bigr)^{\frac{1}{t - 1}} \geqslant \frac{t - 1}{4}$ follows from $(t - 1)! \geqslant \frac{(t - 1)^{t - 1/2}}{e^{t - 1}}$ for $t \geqslant 4$ and can be checked directly for $t = 2, 3$.
	
	Finally $R(t, r) \leqslant 2(20)^{t - 3} \frac{r^{t - 1}}{(\log r)^{t - 2}}$ for $r$ sufficiently large (see Bollob\'{a}s~\cite[Thm 12.17]{Bollobas2001RandomGraphs}) so we obtain, for all large $n$, that
	\begin{equation*}
		\omega(G) \geqslant \frac{1}{20} \bigg(\frac{\beta^{2} n (\log n)^{t - 2}}{(t - 1)^{t - 2}}\bigg)^{\frac{1}{t - 1}} \geqslant \frac{1}{20} \bigg(\frac{\alpha^{2} n (\log n)^{t - 2}}{t^{2} (t - 1)^{t - 3}}\bigg)^{\frac{1}{t - 1}}. \qedhere
	\end{equation*}
\end{proof}

\section{Tur\'{a}n number for no $H$ and no induced $K_{2, t}$}\label{IndTur}

We now focus on the regime where $\alpha$ goes to zero and consider the induced Tur\'{a}n numbers introduced by Loh, Tait, Timmons and Zhou.

\begin{corollary}\label{ex}
	Fix a graph $H$. For any integer $t \geqslant 2$,
	\begin{equation*}
		\operatorname{ex}(n, \{H, K_{2, t}\textnormal{-ind}\}) < \tfrac{t}{2 \sqrt{t - 1}} R(K_{t}, \{H - x\})^{\frac{1}{2}} n^{\frac{3}{2}}.
	\end{equation*}
\end{corollary}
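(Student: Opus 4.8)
The plan is to derive \cref{ex} directly from \cref{main} by contrapositive. \cref{main} tells us that if $G$ is a graph on $n$ vertices with $\alpha\binom{n}{2}$ edges, contains no induced $K_{2,t}$, and satisfies $R(K_t,\{H-x\}) \leqslant \beta_t(\alpha)^2 n$, then $H$ is a subgraph of $G$. Contrapositively, any graph on $n$ vertices that is $H$-free and has no induced $K_{2,t}$ must fail this inequality, i.e.\ it must satisfy $R(K_t,\{H-x\}) > \beta_t(\alpha)^2 n$, where $\alpha\binom{n}{2}$ is its number of edges. So first I would fix such an extremal graph $G$ achieving $\operatorname{ex}(n,\{H,K_{2,t}\text{-ind}\})$ edges, write its edge count as $\alpha\binom{n}{2}$ (thereby \emph{defining} $\alpha$ for this $G$), and record the forced inequality $\beta_t(\alpha)^2 n < R(K_t,\{H-x\})$.

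The next step is purely the calculus relating $\beta$ to $\alpha$ supplied by \cref{calc}. That lemma gives the lower bound $\beta_t(\alpha) \geqslant \tfrac{\sqrt{t-1}}{t}\,\alpha$, hence $\beta_t(\alpha)^2 \geqslant \tfrac{t-1}{t^2}\,\alpha^2$. Combining this with the inequality from the previous paragraph yields
\begin{equation*}
	\tfrac{t-1}{t^2}\,\alpha^2 n \leqslant \beta_t(\alpha)^2 n < R(K_t,\{H-x\}).
\end{equation*}
I would then solve this for $\alpha$, obtaining $\alpha < \tfrac{t}{\sqrt{t-1}}\,R(K_t,\{H-x\})^{1/2} n^{-1/2}$. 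Finally, since the number of edges is $\alpha\binom{n}{2} < \tfrac{\alpha}{2} n^2$, substituting the bound on $\alpha$ gives
\begin{equation*}
	\operatorname{ex}(n,\{H,K_{2,t}\text{-ind}\}) = \alpha\binom{n}{2} < \tfrac{1}{2}\cdot \tfrac{t}{\sqrt{t-1}}\,R(K_t,\{H-x\})^{1/2} n^{1/2}\cdot n = \tfrac{t}{2\sqrt{t-1}}\,R(K_t,\{H-x\})^{1/2} n^{3/2},
\end{equation*}
which is exactly the claimed bound.

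There is no real obstacle here: the corollary is a formal consequence of \cref{main} together with the elementary estimate $\beta_t(\alpha)\geqslant \tfrac{\sqrt{t-1}}{t}\alpha$ from \cref{calc}, packaged via the observation that an $H$-free graph with no induced $K_{2,t}$ cannot meet the hypothesis of \cref{main}. The only point requiring a little care is bookkeeping with the strict versus non-strict inequalities (using $\binom{n}{2} < \tfrac{1}{2} n^2$ keeps the final bound strict, matching the statement) and confirming that the sufficiency clause of \cref{main} — namely that $R(K_t,\{H-x\}) \leqslant \tfrac{t-1}{t^2}\alpha^2 n$ suffices — is precisely the inequality being inverted. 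I would present the argument in the clean contrapositive form above rather than re-deriving \cref{main}.
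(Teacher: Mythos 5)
Your proposal is correct and is essentially identical to the paper's own proof: the paper likewise applies the contrapositive of \cref{main} (using its ``in particular'' clause, which is exactly the bound $\beta_{t}(\alpha)^{2} \geqslant \tfrac{t-1}{t^{2}}\alpha^{2}$ from \cref{calc}) to get $\alpha < \tfrac{t}{\sqrt{t-1}} R(K_{t}, \{H - x\})^{\frac{1}{2}} n^{-\frac{1}{2}}$, and then converts this into the edge bound via $e(G) = \alpha\binom{n}{2}$. The only cosmetic difference is that the paper bounds $\binom{n}{2}$ by $\tfrac{1}{2}n^{\frac{1}{2}}(n-1) \cdot n^{\frac{1}{2}}$ rather than your $\tfrac{1}{2}n^{2}$; both keep the final inequality strict.
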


\begin{proof}
	Let $G$ be a graph on $n$ vertices containing no induced $K_{2, t}$ and no copy of $H$. By Theorem~\ref{main}, $R(K_{t}, \{H - x\}) > \frac{t - 1}{t^{2}} \cdot \alpha^{2} n$ so $\alpha < \frac{t}{\sqrt{t - 1}} n^{-\frac{1}{2}} R(K_{t}, \{H - x\})^{\frac{1}{2}}$. Therefore
	\begin{equation*}
		e(G) = \alpha \tbinom{n}{2} < \tfrac{t}{2 \sqrt{t - 1}} R(K_{t}, \{H - x\})^{\frac{1}{2}} n^{\frac{1}{2}} (n - 1). \qedhere
	\end{equation*}
\end{proof}

\noindent We now use Theorem~\ref{K2tthm} and Corollary~\ref{ex} to prove Theorem~\ref{exgeneral}, restated here for convenience.

\exgeneral*

\begin{proof}
	Note that $R(K_{t}, \{H - x\}) \leqslant R(t + 1, v_{H} - 1)$. For all positive integers $a$ and $b$
	\begin{equation*}
		\tbinom{a + b - 2}{a - 1} = \tfrac{a + b - 2}{a - 1} \cdot \tfrac{a + b - 3}{a - 2} \ \dotsm \ \tfrac{b}{1} \leqslant b^{a - 1},
	\end{equation*}
	and so Erd\H{o}s and Szekeres's bound~\cite{ErdosSzekeres1935} gives $R(K_{t + 1}, \{H - x\}) \leqslant (t + 1)^{v_{H} - 2}$. By Corollary~\ref{ex},
	\begin{equation*}
		\operatorname{ex}(n, \{H, K_{2, t + 1}\textnormal{-ind}\}) < \tfrac{t + 1}{2 \sqrt{t}} (t + 1)^{\frac{v_{H}}{2} - 1} n^{\frac{3}{2}} < (t + 1)^{\frac{v_{H} - 1}{2}} n^{\frac{3}{2}}.
	\end{equation*}
	
	Let $G$ be a graph on $n$ vertices with $\alpha \binom{n}{2}$ edges and no induced $K_{2, t + 1}$. If $G$ does not contain $H$ then $\omega(G) < v_{H}$ so, by Theorem~\ref{K2tthm}, $v_{H} > \bigl\lfloor \tfrac{t}{4} (\alpha^{2} n)^{\frac{1}{t}}\bigr\rfloor - t + 2$. $v_{H} + t - 2$ is an integer so
	\begin{equation*}
		v_{H} + t - 2  > \tfrac{t}{4} (\alpha^{2} n)^{\frac{1}{t}}.
	\end{equation*}
	Now rearranging and using $e(G) = \alpha \binom{n}{2} < \frac{\alpha}{2} n^{2}$ we get
	\begin{equation*}
		e(G) < n^{\frac{3}{2}} 2^{t - 1} \bigl(1 + \tfrac{v_{H} - 2}{t}\bigr)^{\frac{t}{2}} < e^{\frac{v_{H}}{2} - 1} 2^{t - 1} n^{\frac{3}{2}}. \qedhere 
	\end{equation*}
\end{proof}

\section{Proof of main result}\label{proof}

For convenience we restate the main result here. As mentioned earlier, the proof is motivated by that of Holmsen~\cite{Holmsen2019}.

\main*

\begin{proof}
	By Lemma~\ref{calc}, for $\alpha \in [0, 1]$ we have $0 \leqslant \beta \leqslant \alpha \leqslant 1$ and also $\frac{t - 1}{t^{2}}\bigl(\alpha - \beta^{2}\bigr)^{2} = (1 - \alpha) \beta^{2}$.
	
	Suppose that $G$ does not contain $H$. Let the set of missing edges in $G$ be $M = \binom{V(G)}{2} - E(G)$, which has size $(1 - \alpha)\binom{n}{2}$. For each $v \in V(G)$, let
	\begin{align*}
		m_{v} & \text{ be the total number of missing edges in } G_{v}, \\
		\bar{\Delta}_{1}&, \dotsc, \bar{\Delta}_{\gamma_{v}} \text{ be a maximal collection of pairwise vertex-disjoint}\\
		& \qquad \qquad \qquad \text{independent } t\text{-sets in } G_{v}.
	\end{align*}
	By the maximality of $\gamma_{v}$, $G[\Gamma(v)\backslash \cup _{j} \bar{\Delta}_{j}]$ does not contain an independent $t$-set. Furthermore it does not contain any $H - x$ (else together with $v$ we have a copy of $H$ in $G$). Thus
	\begin{align}
		& R(K_{t}, \{H - x\}) - 1 \geqslant \lvert \Gamma(v) \rvert - t \gamma_{v} = \deg(v) - t \gamma_{v}, \text{ and so}\nonumber \\
		& \gamma_{v} \geqslant \tfrac{1}{t}[\deg(v) - R(K_{t}, \{H - x\}) + 1] \geqslant \tfrac{1}{t} [\deg(v) - \beta^{2} (n - 1)]. \label{gammav}
	\end{align}
	$G$ contains no induced $K_{2, t}$ so at most one vertex in $\bar{\Delta}_{i}$ is adjacent to all of $\bar{\Delta}_{j}$ (for any $i \neq j$). In particular, between $\bar{\Delta}_{i}$ and $\bar{\Delta}_{j}$ there must be at least $t - 1$ missing edges. These missing edges are in no $\bar{\Delta}_{k}$ (by vertex-disjointness) and each such edge corresponds to only one pair $(\bar{\Delta}_{i}, \bar{\Delta}_{j})$. Considering these missing edges as well as the ones contained entirely in each $\bar{\Delta}_{k}$ gives
	\begin{equation*}
		m_{v} \geqslant \tbinom{t}{2} \gamma_{v} + (t - 1) \tbinom{\gamma_{v}}{2} = q(\gamma_{v}),
	\end{equation*}
	where
	\begin{equation*}
		q(x) = \tfrac{t - 1}{2} \cdot x (x + t - 1)
	\end{equation*}
	is convex and increasing for non-negative $x$. Averaging \eqref{gammav} over $v \in G$ we have
	\begin{equation*}
		\frac{1}{n} \sum_{v \in G} \gamma_{v} \geqslant \tfrac{1}{tn} [2e(G) - \beta^{2} n (n - 1)] = \tfrac{1}{t} \bigl(\alpha - \beta^{2}\bigr) (n - 1).
	\end{equation*}
	Using Jensen, the monotonicity of $q$, and the fact that $\alpha \geqslant \beta \geqslant \beta^{2}$ gives
	\begin{align*}
		\frac{1}{n} \sum_{v \in G} m_{v} & \geqslant \frac{1}{n} \sum_{v \in G} q(\gamma_{v}) \geqslant q \biggl(\frac{1}{n} \sum_{v \in G} \gamma_{v}\biggr) \geqslant q \bigl(\tfrac{1}{t} \bigl(\alpha - \beta^{2}\bigr) (n - 1)\bigr) \\
		& = \tfrac{t - 1}{2} \cdot \tfrac{1}{t} \bigl(\alpha - \beta^{2}\bigr)(n - 1) \cdot \bigl(\tfrac{1}{t}\bigl(\alpha - \beta^{2}\bigr) (n - 1) + t - 1\bigr) \\
		& \geqslant \tfrac{t - 1}{2} \cdot \tfrac{1}{t} \bigl(\alpha - \beta^{2}\bigr) (n - 1) \cdot \tfrac{1}{t} \bigl(\alpha - \beta^{2}\bigr) n \\
		& = \tfrac{t - 1}{t^{2}} \bigl(\alpha - \beta^{2}\bigr)^{2} \cdot \tbinom{n}{2} \\
		& = \beta^{2} (1 - \alpha) \tbinom{n}{2}.
	\end{align*}
	Now $\sum_{v \in G} m_{v} = \sum_{\bar{e} \in M} \# \{v \text{ with } \bar{e} \subset \Gamma(v)\}$ and $\lvert M \rvert = (1 - \alpha) \binom{n}{2}$ so there is $\bar{e} \in M$ and $S \subset V(G)$ of size at least $\beta^{2}n$ such that $\bar{e} \subset \Gamma(v)$ for each $v \in S$: that is, all vertices of $S$ are in the common neighbourhood of the two end-vertices of the missing edge $\bar{e}$.
	
	Now $G[S]$ contains no independent $t$-set (else together with $\bar{e}$ we have an induced $K_{2, t}$) and $\lvert S \rvert \geqslant \beta^{2}n \geqslant R(K_{t}, \{H - x\})$ so $G[S]$ contains a copy of some $H - x$. Together with one end-vertex of $\bar{e}$ we have a copy of $H$ in $G$.
\end{proof}

\begin{remark}
	It is natural to ask whether the ideas of this argument could be extended to graphs which contain no induced $K_{s, t}$. The argument above is so clean partly because the number of independent 2-sets in $G$ is determined by $\alpha$ (it is $\lvert M \rvert = (1 - \alpha)\binom{n}{2}$). Extending to no induced $K_{s, t}$ would require some knowledge of the number of independent $s$-sets in $G$.
\end{remark}

\section{Improvement when there are few triangles}\label{Triangles}

Corollary~\ref{ex} says $\operatorname{ex}(n, \{H, K_{2, t}\textnormal{-ind}\}) < \frac{t}{2 \sqrt{t - 1}} R(K_{t}, \{H - x\})^{\frac{1}{2}} n^{\frac{3}{2}}$. In this section we show that $n$-vertex $H$-free graphs with no induced $K_{2, t}$ contain $o(n^{2})$ triangles. This asymptotically improves our lower bound on the number of missing edges in each neighbourhood and so improves Corollary~\ref{ex} by a factor of $\sqrt{t}$ as well as reducing the Ramsey number used -- see Theorem~\ref{triangle}.

\begin{theorem}\label{fewtriangles}
	Fix a graph $H$ and an integer $t \geqslant 2$. Every $n$-vertex graph which contains no copy of $H$ and no induced $K_{2, t}$ has at most $\mathcal{O}(n^{27/14})$ triangles.
\end{theorem}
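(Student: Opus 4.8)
The plan is to combine two structural consequences of the hypotheses with a Cauchy--Schwarz recursion on clique counts. Write $v_{H}$ for the number of vertices of $H$. First I would record two facts. Since $G$ is $H$-free it is $K_{v_{H}}$-free, so $\omega(G) \leqslant v_{H} - 1$. Secondly, if $x, y$ are non-adjacent then their common neighbourhood $\Gamma(x) \cap \Gamma(y)$ contains no independent $t$-set (otherwise $\{x, y\}$ together with that $t$-set would be an induced $K_{2, t}$) and no $K_{v_{H}}$ (as $G$ is $H$-free), so by Ramsey's theorem $\lvert \Gamma(x) \cap \Gamma(y)\rvert < R(t, v_{H})$, a constant. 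In particular the number of paths $x - z - y$ with $x \not\sim y$ (``open cherries'') is $\bigO{n^{2}}$. Together with Corollary~\ref{ex}, which gives $e(G) = \bigO{n^{3/2}}$, these are the only inputs I would use.

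For the count, let $T_{r}$ denote the number of copies of $K_{r}$ in $G$, so that $T_{2} = e(G) = \bigO{n^{3/2}}$, $T_{3}$ is the quantity of interest, and $T_{v_{H}} = 0$. Fix $r$ and, for each $(r - 1)$-clique $Q$, let $c_{Q}$ be the number of vertices adjacent to all of $Q$; then $\sum_{Q} c_{Q} = r\, T_{r}$. Two vertices $x, y$ both completing the same $Q$ are either adjacent, in which case $Q \cup \{x, y\}$ is a $K_{r + 1}$, or non-adjacent, in which case $\{x, y\}$ is a non-edge whose common neighbourhood contains the $(r - 1)$-set $Q$; by the second structural fact the latter occurs $\bigO{n^{2}}$ times in total. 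Hence $\sum_{Q} \binom{c_{Q}}{2} = \bigO{T_{r + 1} + n^{2}}$, and the Cauchy--Schwarz inequality $\bigl(\sum_{Q} c_{Q}\bigr)^{2} \leqslant T_{r - 1} \sum_{Q} c_{Q}^{2}$ yields the recursion
\[
	T_{r}^{2} = \bigO{T_{r - 1}(T_{r} + T_{r + 1} + n^{2})}, \qquad 3 \leqslant r \leqslant v_{H} - 1.
\]

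Finally I would solve this recursion with the boundary data $T_{2} = \bigO{n^{3/2}}$ and $T_{v_{H}} = 0$. Writing $a_{r}$ for the exponent of $T_{r}$, it reads $2 a_{r} \leqslant a_{r - 1} + \max(a_{r}, a_{r + 1}, 2)$. Taking the largest index $k$ with $a_{k} > 2$ forces $\max(a_{k}, a_{k + 1}, 2) = a_{k}$ and hence $a_{k} \leqslant a_{k - 1}$; this inequality propagates downward, keeping the relevant maximum equal to $a_{r}$ at each step, until $r = 3$, where it gives $2 a_{3} \leqslant \tfrac{3}{2} + a_{3}$, that is $a_{3} \leqslant \tfrac{3}{2}$, contradicting $a_{3} > 2$. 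Thus every $a_{r} \leqslant 2$, and the $r = 3$ instance then gives $2 a_{3} \leqslant \tfrac{3}{2} + 2$, so $T_{3} = \bigO{n^{7/4}}$, comfortably inside the claimed bound $\bigO{n^{27/14}}$.

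I expect the main obstacle to be the control of the larger cliques: one must rule out a ``bump'' in the clique-count profile $T_{3}, T_{4}, \dotsc$, and it is precisely the finiteness of the clique hierarchy (from $\omega(G) < v_{H}$, which makes $T_{v_{H}} = 0$) together with the constant bound on codegrees of non-edges that collapses any such bump. The other point needing care is that the additive $\bigO{n^{2}}$ error terms in the recursion really are $\bigO{n^{2}}$, which is exactly the open-cherry estimate from the first paragraph.
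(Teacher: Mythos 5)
Your proof is correct, and it takes a genuinely different route from the paper's --- in fact it yields a stronger exponent. The paper also starts from Corollary~\ref{ex}, but applies it twice: once to $G$ itself (giving $e(G) \leqslant Cn^{3/2}$) and once inside each neighbourhood (giving $e(G_{v}) \leqslant C \deg(v)^{3/2}$, since $G_{v}$ inherits both hypotheses), then splits the vertex set at a degree threshold $f(n)$: triangles containing a low-degree vertex number at most $\sum_{v \notin X} e(G_{v}) = \bigO{n^{3/2} f(n)^{1/2}}$ by convexity, while triangles inside the high-degree set $X$ are bounded crudely by $\binom{\lvert X \rvert}{3}$ with $\lvert X \rvert \leqslant 2Cn^{3/2}/f(n)$; optimising $f(n) = \Theta(n^{6/7})$ gives $\bigO{n^{27/14}}$. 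You instead exploit the Ramsey codegree bound $\lvert \Gamma(x) \cap \Gamma(y) \rvert < R(t, v_{H})$ for non-adjacent $x, y$ (correct: an independent $t$-set there gives an induced $K_{2,t}$, and a $K_{v_{H}}$ contains $H$), which the paper never uses, and feed it into a Cauchy--Schwarz recursion on clique counts anchored by $T_{2} = \bigO{n^{3/2}}$ and $T_{v_{H}} = 0$. The counting behind the recursion is right (each $K_{r+1}$ is counted $\binom{r+1}{2}$ times among adjacent completing pairs, and non-adjacent pairs contribute at most $\binom{R(t,v_{H})}{r-1}\binom{n}{2}$ in total), and the downward propagation from the largest index with exponent exceeding $2$ is exactly the right way to collapse a bump, since along the chain $a_{r} \geqslant a_{r+1}$ keeps the maximum equal to $a_{r}$. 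The one point to tighten is the treatment of ``exponents'': $a_{r}$ is not literally defined for a single graph, so you should either set $a_{r} = \limsup_{n} \log M_{r}(n)/\log n$, where $M_{r}(n)$ is the maximum of $T_{r}$ over admissible $n$-vertex graphs (the recursion passes to these maxima since all terms are nonnegative, and the constants are uniform because $3 \leqslant r \leqslant v_{H} - 1$ is a bounded range), or carry explicit constants through the finitely many steps. Either way you obtain $T_{3} \leqslant n^{7/4 + o(1)}$, which is comfortably within the claimed $\bigO{n^{27/14}}$ and indeed improves the paper's bound; only the clean statement $T_{3} = \bigO{n^{7/4}}$ requires the extra bookkeeping. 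Since the paper's later application (Theorem~\ref{triangle}) only needs $\Delta(n, H, t) = o(n^{2})$, both arguments suffice, but yours buys a better exponent at the cost of the Ramsey-theoretic input, while the paper's is shorter and self-contained given Corollary~\ref{ex}.
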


\begin{proof}
	By Corollary~\ref{ex}, there is a constant $C = C_{H, t}$ such that every $m$-vertex graph which contains no copy of $H$ and no induced $K_{2, t}$ has at most $Cm^{3/2}$ edges. 
	
	Let $G$ be a graph on $n$ vertices containing no induced $K_{2, t}$ and no copy of $H$. For each vertex $v$ of $G$, note that exactly $e(G_{v})$ triangles in $G$ contain $v$. As $G$ has no copy of $H$ and no induced $K_{2, t}$,
	\begin{align*}
		e(G) & \leqslant Cn^{3/2}, \\
		e(G_{v}) & \leqslant C \deg(v)^{3/2}.
	\end{align*}
	Let $X$ be the set of vertices in $G$ whose degree is at least $f(n)$ (a function of $n$ whose value we give later). Firstly,
	\begin{equation*}
		\lvert X \rvert f(n) \leqslant \sum_{v \in X} \deg(v) \leqslant 2e(G) \leqslant 2C n^{3/2},
	\end{equation*}
	and so the number of triangles in $G$ whose vertices are all in $X$ is at most
	\begin{equation*}
		\tbinom{\lvert X \rvert}{3} \leqslant \tfrac{1}{6} \lvert X \rvert^{3} \leqslant \tfrac{4}{3} C^{3} n^{9/2} f(n)^{-3}.
	\end{equation*}
	The number of triangles of $G$ containing at least one vertex in $V(G) \setminus X$ is at most
	\begin{equation*}
		\sum_{v \not \in X} e(G_{v}) \leqslant C \sum_{v \not \in X} \deg(v)^{3/2}.
	\end{equation*}
	The function $x \mapsto x^{3/2}$ is convex and all $v \not \in X$ satisfy $\deg(v) \leqslant f(n)$, so
	\begin{align*}
		\sum_{v \not \in X} \deg(v)^{3/2} & \leqslant \biggl(f(n)^{-1} \sum_{v \not \in X} \deg(v)\biggr) f(n)^{3/2} = f(n)^{1/2} \sum_{v \not \in X} \deg(v) \\
		& \leqslant  2f(n)^{1/2} e(G) \leqslant 2C n^{3/2} f(n)^{1/2}.
	\end{align*}
	Thus, the number of triangles in $G$ is at most
	\begin{equation*}
		\tfrac{4}{3} C^{3} n^{9/2} f(n)^{-3} + 2C^{2} n^{3/2} f(n)^{1/2}.
	\end{equation*}
	We minimise this last expression by taking $f(n) = 2^{4/7} C^{2/7} n^{6/7}$ which gives a value less than $3 C^{15/7} n^{27/14}$.
\end{proof}

\begin{theorem}\label{triangle}
	Fix a graph $H$ and an integer $t \geqslant 2$. Let $\Delta(n, H, t)$ denote the greatest number of triangles in a graph on $n$ vertices containing no copy of $H$ and no induced $K_{2, t}$. Let $G$ be a graph on $n$ vertices with $\alpha \binom{n}{2}$ edges containing no induced $K_{2, t}$. If
	\begin{equation*}
		\alpha^{2}(n - 1) > R(K_{t}, \{H - \bar{e}\}) - 1 + 3 \Delta(n, H, t) \tbinom{n}{2}^{-1},
	\end{equation*}
	then $H$ is a subgraph of $G$. In particular,
	\begin{equation*}
		\operatorname{ex}(n, \{H, K_{2, t}\textnormal{-ind}\}) \leqslant \tfrac{1}{2} \bigl(R(K_{t}, \{H - \bar{e}\}) - 1 + o(1)\bigr)^{\frac{1}{2}} n^{\frac{3}{2}}.
	\end{equation*}
\end{theorem}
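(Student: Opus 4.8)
The plan is to sharpen the argument behind Theorem~\ref{main} by counting the missing edges in each neighbourhood \emph{exactly} (rather than through disjoint independent $t$-sets) and then feeding in the triangle bound. Suppose $G$ contains no copy of $H$; since $G$ also has no induced $K_{2,t}$, the number of triangles of $G$ is at most $\Delta(n, H, t)$. As before, let $M = \binom{V(G)}{2} - E(G)$, so $\lvert M \rvert = (1 - \alpha)\binom{n}{2}$, and for each vertex $v$ let $m_v$ be the number of missing edges in $G_v = G[\Gamma(v)]$. The first observation is the identity $m_v = \binom{\deg(v)}{2} - e(G_v)$, together with the fact that $e(G_v)$ counts exactly the triangles of $G$ through $v$, so that $\sum_v e(G_v) = 3 \cdot (\text{number of triangles of } G) \leqslant 3\Delta(n, H, t)$.

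Next I would lower-bound $\sum_v \binom{\deg(v)}{2}$ by convexity: since $\sum_v \deg(v) = 2e(G) = \alpha n(n-1)$, Cauchy--Schwarz gives $\sum_v \deg(v)^2 \geqslant \alpha^2 n (n-1)^2$, whence
\[
	\sum_v m_v = \tfrac{1}{2}\Bigl(\textstyle\sum_v \deg(v)^2 - \sum_v \deg(v)\Bigr) - \sum_v e(G_v) \geqslant \tfrac{1}{2}\alpha^2 n(n-1)^2 - \tfrac{1}{2}\alpha n(n-1) - 3\Delta(n, H, t).
\]
Exactly as in Theorem~\ref{main}, $\sum_v m_v = \sum_{\bar{e} \in M} \#\{v : \bar{e} \subset \Gamma(v)\}$, so averaging over $M$ produces a missing edge $\bar{e}$ whose two endpoints have a common neighbourhood $S$ with $\lvert S \rvert \geqslant \sum_v m_v / \lvert M \rvert$. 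Dividing the displayed bound by $\lvert M \rvert = (1-\alpha)\binom{n}{2}$ turns the right-hand side into $\tfrac{\alpha^2(n-1) - \alpha - 3\Delta(n,H,t)\binom{n}{2}^{-1}}{1-\alpha}$, and the hypothesis $\alpha^2(n-1) > R - 1 + 3\Delta(n,H,t)\binom{n}{2}^{-1}$ (writing $R = R(K_t, \{H - \bar{e}\})$) together with $R \geqslant 2$ yields $\lvert S \rvert > R - 1$, i.e.\ $\lvert S \rvert \geqslant R$.

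The crucial new step is how $S$ is exploited, and this is the point I expect to need the most care. The endpoints $x, y$ of $\bar{e}$ are \emph{non-adjacent}, neither lies in $S$, and each is joined to all of $S$. As in Theorem~\ref{main}, $G[S]$ contains no independent $t$-set (otherwise it would form an induced $K_{2,t}$ with $x$ and $y$), so since $\lvert S \rvert \geqslant R(K_t, \{H - \bar{e}\})$ the graph $G[S]$ contains a copy of some member of $\{H - \bar{e}\}$. If that member is $H$ with one vertex removed, I re-insert the missing vertex as $x$; if it is $H$ with two non-adjacent vertices removed, I re-insert them as $x$ and $y$ simultaneously. The latter is legitimate \emph{precisely} because $x, y$ are non-adjacent (matching the non-adjacency of the two deleted vertices of $H$) and both adjacent to everything in $S$. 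Either way we recover a copy of $H$ in $G$, a contradiction. This is where passing from $\{H - x\}$ to the larger family $\{H - \bar{e}\}$, and hence to the smaller Ramsey number, pays off; the chief obstacle is simply verifying this two-vertex re-insertion is valid.

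Finally, for the induced Tur\'{a}n bound I take the contrapositive: if $G$ has no copy of $H$ and no induced $K_{2,t}$ then $\alpha^2(n-1) \leqslant R - 1 + 3\Delta(n,H,t)\binom{n}{2}^{-1}$. By Theorem~\ref{fewtriangles}, $\Delta(n, H, t) = \mathcal{O}(n^{27/14})$, so $3\Delta(n,H,t)\binom{n}{2}^{-1} = \mathcal{O}(n^{-1/14}) = o(1)$, giving $\alpha \leqslant \bigl((R - 1 + o(1))/(n-1)\bigr)^{1/2}$. Multiplying by $\binom{n}{2}$ and using $n(n-1)^{1/2} \leqslant n^{3/2}$ then yields $e(G) \leqslant \tfrac{1}{2}(R - 1 + o(1))^{1/2} n^{3/2}$, as required. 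Here the exact count of $m_v$ combined with the triangle bound is what removes the $\tfrac{t}{\sqrt{t-1}} \approx \sqrt{t}$ factor present in Corollary~\ref{ex}, while the re-insertion step is what improves the Ramsey number; the only quantitative nuisances are the harmless $-\alpha$ and $1/(1-\alpha)$ terms, which are absorbed using $R \geqslant 2$.
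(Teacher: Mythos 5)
Your proposal is correct and follows essentially the same route as the paper's proof: the exact identity $m_v + e(G_v) = \binom{\deg(v)}{2}$ combined with $\sum_v e(G_v) \leqslant 3\Delta(n,H,t)$, convexity to lower-bound $\sum_v \binom{\deg(v)}{2}$ (your Cauchy--Schwarz is the paper's Jensen step in disguise), averaging over the missing edges to find $\bar{e}$ with a large common neighbourhood $S$, and the two-vertex re-insertion justified by the non-adjacency of the endpoints of $\bar{e}$, with the $-\alpha$ term absorbed via $R(K_t,\{H-\bar{e}\}) \geqslant 2$ exactly as in the paper. The concluding $o(1)$ bound via Theorem~\ref{fewtriangles} also matches.
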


\begin{proof}
	$R(K_{t}, \{H - \bar{e}\}) \geqslant 2$ so we in fact have
	\begin{equation*}
		\alpha[\alpha(n - 1) - 1] > (1 - \alpha)(R(K_{t}, \{H - \bar{e}\}) - 1) + 3 \Delta(n, H, t) \tbinom{n}{2}^{-1}.
	\end{equation*}
	We will use this to show $H$ is a subgraph of $G$. Suppose for contradiction it is not. Let the set of missing edges in $G$ be $M = \binom{V(G)}{2} - E(G)$ which has size $(1 - \alpha) \binom{n}{2}$. For each $v \in V(G)$ let
	\begin{align*}
		e_{v} & = e(G_{v}), \\
		m_{v} & = \text{ total number of missing edges in } G_{v}.
	\end{align*}
	First note that $e_{v} + m_{v} = \binom{\lvert\Gamma(v)\rvert}{2} = \binom{\deg(v)}{2}$, so, by Jensen's inequality,
	\begin{equation*}
		\sum_{v \in G} (m_{v} + e_{v}) \geqslant n \tbinom{2e(G)/n}{2} = n \tbinom{\alpha(n - 1)}{2} = \alpha [\alpha(n - 1) - 1] \tbinom{n}{2}.
	\end{equation*}
	
	Now $e_{v}$ is also the number of triangles in $G$ containing $v$ so $\sum_{v \in G} e_{v}$ is three times the number of triangles in $G$ which is at most $3 \Delta(n, H, t)$. Thus
	\begin{equation*}
		\sum_{v \in G} m_{v} \geqslant \alpha [\alpha(n - 1) - 1] \tbinom{n}{2} - 3 \Delta(n, H, t) > (1 - \alpha)\tbinom{n}{2} (R(K_{t}, \{H - \bar{e}\}) - 1).
	\end{equation*}
	Now $\sum_{v \in G} m_{v} = \sum_{\bar{e} \in M} \# \{v \text{ with } \bar{e} \subset \Gamma(v)\}$ and $\lvert M \rvert = (1 - \alpha) \binom{n}{2}$ so there is some missing edge $\bar{e}$ and some $S \subset V(G)$ of size $R(K_{t}, \{H - \bar{e}\})$ with $\bar{e} \subset \Gamma(v)$ for each $v \in S$. $G[S]$ does not contain an independent $t$-set (else together with $\bar{e}$ we have an induced $K_{2, t}$ in $G$) so $G[S]$ contains a copy of some $H - x$ or some $H - \bar{e}$. Together with $\bar{e}$ we have that $G$ contains a copy of $H$ proving the first result.
	
	By Theorem~\ref{fewtriangles}, $\Delta(n, H, t) = o(n^{2})$. Suppose that $G$ is a graph on $n$ vertices with no $H$ and no induced $K_{2, t}$. We must have
	\begin{equation*}
		\alpha^{2}(n - 1) \leqslant R(K_{t}, \{H - \bar{e}\}) - 1 + o(1).
	\end{equation*}
	Using $e(G) = \alpha \binom{n}{2}$ we get the required result.
\end{proof}

\noindent\textbf{Acknowledgement.} The author would like to thank Andrew Thomason for many helpful discussions as well as the anonymous referees for their careful reading of the manuscript and bringing~\cite{EGM2019} to the author's attention.


\end{document}